\theoremstyle{plain}
\newtheorem{theorem}{Theorem}
\newtheorem{lemma}{Lemma}
\newtheorem{corollary}{Corollary}
\theoremstyle{definition}
\newtheorem{definition}{Definition}
\newtheorem{example}{Example}
\begin{document}
\author{Tetiana M. Osipchuk}

\address{Institute of Mathematics of the National Academy of Sciences of Ukraine, Tereshchenkivska str. 3, UA-01004, Kyiv, Ukraine}
\email{osipchuk@imath.kiev.ua}

\title[Topological properties of closed weakly $m$-convex sets]{Topological properties \\ of closed weakly $m$-convex sets}

\keywords{convex set, closed set,  $m$-convex set, weakly $m$-convex set, $m$-nonconvexity point of set, real Euclidean space}
\subjclass{32F17, 52A30}

\maketitle
\small
\noindent {\bf Abstract.} The present work considers the properties of generally convex sets in the $n$-dimensional real Euclidean space $\mathbb{R}^n$, $n>1$,
known as weakly $m$-convex, $m=1,2,\ldots,n-1$.  An open set of
$\mathbb{R}^n$ is called \textbf{\emph{weakly $m$-convex}} if for any boundary point of the set
there exists an $m$-dimensional plane passing through this point and not intersecting the given set. A
closed set of $\mathbb{R}^n$ is called \textbf{\emph{weakly $m$-convex}} if it is approximated from the
outside by a family of open weakly $m$-convex sets. A point of the complement of a set of $\mathbb{R}^n$ to the whole space is called an \textbf{\emph{$m$-nonconvexity point}} of the set  if any $m$-dimensional plane passing through the point intersects the set. It is proved that any closed, weakly $(n-1)$-convex set in $\mathbb{R}^n$ with non-empty set of $(n-1)$-nonconvexity points consists of not less than three connected components. It is also proved that the interior of a closed, weakly $1$-convex set with a finite number of components in the plane is weakly $1$-convex. Weakly $m$-convex domains and closed connected sets  in $\mathbb{R}^n$ with non-empty set of $m$-nonconvexity points are constructed for any $n\ge 3$ and any $m=1,2,\ldots,n-2$.
\large
\section{Introduction}

 As is well known, a set of the multidimensional real Euclidean space $\mathbb{R}^n$ is called \emph{convex} if, together with its two arbitrary points, it contains the entire segment connecting the points \cite{Leh}. Moreover, the intersection of an arbitrary number of convex sets is again a convex set.
The intersection of all  convex sets containing a given set  $X\subset\mathbb{R}^n$ is called the
\emph{convex hull} of the set $X$ \cite{Leh} and is denoted by
$$
\mathrm{conv}\,X=\bigcap\limits_{K\supset X}K, \quad\mbox{where sets $K$ are convex.}
$$

Consider some generalizations of the convexity notion.

We recall that any $m$-dimensional affine subspace of the space  $\mathbb{R}^n$, $1\le m< n$, is called an \emph{$m$-dimensional plane} \cite{Roz}.

\begin{definition}\label{def1} (\cite{Zel}) A set $E\subset\mathbb{R}^n$ is called {\bf\emph{ $m$-convex with respect to a point}} $x\in \mathbb{R}^n\setminus E$,  $1\le m<n$, if there exists
an $m$-dimensional plane $L$ such that $x\in L$ and $L\cap E=\varnothing$.
\end{definition}

\begin{definition}\label{def2} (\cite{Zel}) A set $E\subset\mathbb{R}^n$ is called {\bf\emph{$m$-convex}},  $1\le m<n$, if it is $m$-convex with respect to every point $x\in \mathbb{R}^n\setminus E$.
\end{definition}
The intersection of an arbitrary number of $m$-convex sets is again an $m$-convex set \cite{Stef1}. On the other hand, there exist convex sets that are not $m$-convex. An open triangle in the plane together with one of its vertices $x$ is convex. However, one can not draw the straight line through a point inside the triangle's sides with common vertex $x$ which does not intersect the set.  Thus, the set is not $1$-convex. And conversely, there exist $m$-convex sets that are not convex. The example of such a set is the union of two open or closed non-overlapping rectangles in the plane which are symmetric with respect  to the axis $Ox$.

The properties of $m$-convex compacts in the space $\mathbb{R}^n$, related to estimating cohomology groups, are investigated by Yuri~B.~Zelinskii in \cite{Zel}. The p  roperties of $(n-1)$-convex sets in $\mathbb{R}^n$ are considered by V.~L.~Melnyk \cite{Mel} and, under some additional conditions, by A.~I.~Gerasin \cite{Ger1}, \cite{Ger2}. In particular, the topological classification of $(n-1)$-convex sets in $\mathbb{R}^n$, $n\ge 2$, with smooth boundary is obtained in \cite{Mel} and is the following: {\it any $(n-1)$-convex set in $\mathbb{R}^n$, $n\ge 2$, with smooth boundary is convex, or consists of no more than two unbounded connected components, or is given by the Cartesian product $E^1\times \mathbb{R}^{n-1}$, where $E^1$ is a subset of $\mathbb{R}$}.

We shall use the following standard notations. For a set $G\subset\mathbb{R}^n$ let $\overline{G}$
be its closure, $\mathrm{Int}\, G$ be its interior, and $\partial
G=\overline{G}\setminus\mathrm{Int}\, G$ be its boundary.

\begin{definition}\label{def3} (\cite{Zel01}) An open set  $G\subset\mathbb{R}^n$ is called \textbf{\emph{weakly $m$-convex}},  $1\le m<n$, if it is $m$-convex with respect to any point  $x\in\partial G$.
\end{definition}
\begin{definition}\label{def6} (\cite{Aiz3})
They say that a set $A$ \textbf{\emph{is approximated from the outside}} by a family of open sets $A_k$,
$k=1,2,\ldots$, if $\overline{A}_{k+1}$ is contained in $A_k$, and $A=\cap_kA_k$.
\end{definition}
It can be proved that any set approximated from the outside by a
family of open sets is closed.
\begin{definition}\label{def4} \textup{(\cite{Zel01}, \cite{Dak})} A
closed set $E\subset\mathbb{R}^n$ is called \textbf{\emph{weakly $m$-convex}} if it can be approximated
from the outside by a family of open weakly $m$-convex sets.
\end{definition}

Thus, any weakly $m$-convex set  $A$ is either open or closed. Among closed weakly
$m$-convex sets there are also sets with empty interior:
$$
A=\overline{A}=\overline{A}\setminus \mathrm{Int}\, A=\partial A.
$$

Any weakly $m$-convex set  is obviously weakly $p$-convex, $p<m$.
It is also easy to see that any open convex set $E\subset\mathbb{R}^n$ is weakly $m$-convex, $1\le m<n$. Indeed, for any boundary point of a convex set there is a supporting hyperplane of the set \cite{Leh}. And, since $E$ is open, the supporting hyperplane does not intersect $E$. Thus, $E$ is $(n-1)$-convex and, therefore, $m$-convex, $1\le m<n$.
Similarly,  any closed convex set $\mathbb{R}^n$ is weakly $m$-convex, $1\le m<n$, since it can be approximated from the outside by a family of open convex sets homothetic to the given one. Moreover, there are open and closed weakly $m$-convex sets which are not convex.

The geometric and topological properties of weakly $m$-convex sets are investigated in \cite{Dak01}. In particular, the following proposition is proved in \cite{Dak01}: {\it If a set $E_1$ is weakly $m$-convex and a set $E_2$ is weakly $p$-convex, $p\le m$, then the set $E_1\cap E_2$ is weakly $p$-convex.} The properties of the class of generalized convex sets on Grassmannian manifolds which are closely related to the properties of the conjugate sets (see Definition 2, \cite{Zel01}) are investigated in \cite{Zel01}. This class includes $m$-convex and weakly $m$-convex sets in $\mathbb{R}^n$.

\begin{definition}\label{def5}\textup{(\cite{Osi})}
A point $x\in \mathbb{R}^n\setminus E$ is called an \textbf{\emph{$m$-nonconvexity point of a set $E\subset\mathbb{R}^n$}} if any $m$-dimensional plane passing through $x$ intersects $E$.
\end{definition}

The set of all $m$-nonconvexity points of a set  $E\subset\mathbb{R}^n$, $n\ge 2$, is denoted by $(E)_m^{\triangle}$, $1\le m<n$. Thus, if a set $E\subset\mathbb{R}^n$ is not $m$-semiconvex, then obviously $(E)_m^{\triangle}\ne\varnothing$. And let $$(E)_1^{\triangle}:=(E)^{\triangle},\quad E\subset \mathbb{R}^n,\quad n\ge 2.$$

Let us denote the classes of $m$-convex and weakly $m$-convex sets in $\mathbb{R}^n$, $n\ge 2$, $1\le m<n$, by $\mathbf{C^n_m}$ and $\mathbf{{WC}^n_m}$, respectively.  Any open set of the class $\mathbf{C^n_m}$ clearly belongs to the class $\mathbf{{WC}^n_m}$. The converse statement is not true. It turns out that the class  $\mathbf{WC^n_m}\setminus \mathbf{C^n_m}$, $n\ge 2$, of open weakly $m$-convex but not $m$-convex sets is not empty for any $m=1,2,\ldots,n-1$ \cite{Osi}, \cite{Dak}.
 Moreover, the following proposition is true:

\begin{lemma}\label{theor01} \textup{(\cite{Dak})}
An open set of the class $\mathbf{WC^n_{n-1}}\setminus \mathbf{C^n_{n-1}}$ consists of not less than three connected components.
\end{lemma}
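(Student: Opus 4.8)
The plan is to prove the contrapositive: if $G$ is an open weakly $(n-1)$-convex subset of $\mathbb{R}^n$ with at most two connected components, then $G\in\mathbf{C^n_{n-1}}$, i.e.\ $(G)_{n-1}^{\triangle}=\varnothing$. Two reductions come first. Observe $(G)_{n-1}^{\triangle}\subset\mathbb{R}^n\setminus\overline G$: a point of $G$ is excluded by definition, and weak $(n-1)$-convexity supplies, for every $x\in\partial G$, a hyperplane through $x$ missing $G$, so $x\notin(G)_{n-1}^{\triangle}$. Observe also that every connected component $G_i$ of $G$ is again open and weakly $(n-1)$-convex, since $\partial G_i\subset\partial G$ and a hyperplane missing $G$ misses $G_i$. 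Hence it suffices to fix a point $x\in\mathbb{R}^n\setminus\overline G$ through which every hyperplane meets $G$, and to reach a contradiction, treating separately $G=G_1$ and $G=G_1\cup G_2$ with $G_1,G_2$ disjoint, open and connected.

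The core step is: \emph{the closure of a connected open weakly $(n-1)$-convex set is convex}. For $p\in\partial G$ pick a hyperplane $L_p\ni p$ with $L_p\cap G=\varnothing$; since $G$ is connected it lies in one of the open half-spaces bounded by $L_p$, so $L_p$ supports $\overline G$ at $p$. Then $\overline G$ is star-shaped with respect to each $a\in G$: if $[a,b]\not\subset\overline G$ for some $b\in\overline G$, let $a'$ be the last point of the segment (starting from $a$) that lies in $\overline G$; then $a'\in\partial G$, and the supporting affine functional $\ell$ at $a'$ satisfies $\ell(a)<0$ (as $a$ is an interior point of $\overline G$), $\ell(a')=0$ and $\ell(b)\le0$, which is impossible because $\ell$ is affine and $a'$ lies strictly between $a$ and $b$. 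A closed set that is star-shaped with respect to a dense set of interior points is convex (pass to the limit along segments joining such points), so $\overline G$ is convex. Applying this with $G=G_1$: $\overline G$ is closed and convex with $x\notin\overline G$, so a hyperplane strictly separating $x$ from $\overline G$ can be translated to a hyperplane through $x$ missing $\overline G\supset G$, contradicting the choice of $x$. This rules out one component.

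For two components put $K_i:=\overline{G_i}$; by the core step each $K_i$ is a convex body, $G_i\subset K_i$, and since $G_1$ is open and disjoint from $G_2$ it is disjoint from $\overline{G_2}=K_2$, so there is a separating hyperplane $S=\{\langle z,\nu\rangle=0\}$ with $K_1\subset\{\langle z,\nu\rangle\le0\}$ and $K_2\subset\{\langle z,\nu\rangle\ge0\}$; consequently each $G_i$ lies in the respective \emph{open} half-space, and $x\notin K_1\cup K_2$. If $\langle x,\nu\rangle=0$ then $S$ is a hyperplane through $x$ disjoint from $G_1\cup G_2$, a contradiction; so assume $\langle x,\nu\rangle<0$. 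Then the hyperplane through $x$ parallel to $S$ already misses $G_2$, and strict separation of $x$ from $K_1$ yields a hyperplane $T$ through $x$ with $G_1$ strictly on one side. The plan is to move, inside the pencil of hyperplanes through $x$ containing $T\cap S_x$ (where $S_x$ is the $S$-parallel copy), from $T$ toward $S_x$, tracking the two functionals $w\mapsto\inf_{z\in K_i}\langle z-x,w\rangle$, and to show by an intermediate-value argument that at some intermediate position the hyperplane has $G_1$ on one side and $G_2$ on one side simultaneously, hence meets neither — the desired contradiction. Equivalently, this step asserts that the union of two convex open sets admitting a common separating hyperplane is $(n-1)$-convex.

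I expect this last step to be the main obstacle. One must allow $K_1$ or $K_2$ to be unbounded, so the infima above may be $-\infty$, and one must check that the transitional position found for $K_1$ — a supporting hyperplane of $K_1$ through $x$ — genuinely avoids $\mathrm{Int}\,K_2$. The clean route is probably to dichotomize on whether $x\in\mathrm{Int}\,\mathrm{conv}(K_1\cup K_2)$: if not, a functional separating $\{x\}$ from $\mathrm{conv}(K_1\cup K_2)$ gives a hyperplane through $x$ with all of $K_1\cup K_2$ on one side; if so, one instead produces a hyperplane through $x$ (weakly) separating $K_1$ from $K_2$, using that $x$ then lies on a segment $[a,b]$ with $a\in K_1$, $b\in K_2$. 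Once both cases are excluded, $G$ cannot have fewer than three connected components.
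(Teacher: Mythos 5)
Your reductions and your ``core step'' are sound: $(G)_{n-1}^{\triangle}\subset\mathbb{R}^n\setminus\overline G$, each component is open and weakly $(n-1)$-convex, and the argument that the closure of a connected open weakly $(n-1)$-convex set is convex (supporting hyperplane at each boundary point, star-shapedness from interior points, passage to the limit) is correct; this settles the one-component case completely. Note that the paper itself does not prove this lemma but cites it from the literature, so the comparison can only be about correctness of your argument.

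The two-component case, however, contains a genuine gap, and it is precisely the step you flag as ``the main obstacle.'' The statement you offer as an equivalent formulation --- that the union of two open convex sets admitting a common separating hyperplane is $(n-1)$-convex --- is false. Take in $\mathbb{R}^2$ the sets $G_1=\{(u,v):u<0\}$ and $G_2=\{(u,v):u>0,\ v>1\}$, separated by the line $u=0$, and the point $x=(1,0)\notin \overline{G_1}\cup\overline{G_2}$: every non-vertical line through $x$ eventually enters $G_1$, while the vertical line $u=1$ meets $G_2$, so every line through $x$ meets $G_1\cup G_2$. The same example defeats the ``if so'' branch of your dichotomy: here $x\in\mathrm{Int}\,\mathrm{conv}(K_1\cup K_2)$, yet no line through $x$ weakly separates $K_1$ from $K_2$, since such a line would have to avoid both $\mathrm{Int}\,K_1$ and $\mathrm{Int}\,K_2$. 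The lemma survives because this particular $G_1\cup G_2$ is not weakly $1$-convex (at the boundary point $(2,1)$ of $G_2$ no line misses both components); but your argument, after replacing $G_1,G_2$ by their convex closures $K_1,K_2$ with disjoint interiors, retains no trace of the hypothesis that at each boundary point of one component the hyperplane can be chosen to miss the \emph{other} component as well --- and the counterexample shows that without this joint information the desired conclusion simply does not follow. Any correct proof of the two-component case must re-use weak $(n-1)$-convexity of the union itself, not only convexity of the two closures. (A minor, fixable point besides: the existence of the separating hyperplane $S$ should be derived from $\mathrm{Int}\,K_1\cap\mathrm{Int}\,K_2=\varnothing$, since $G_1$ itself need not be convex, so ``$G_1$ open and disjoint from $K_2$'' is not by itself a separation hypothesis between convex sets.)
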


The estimate of the number of components of the sets of the class $\mathbf{WC^n_m}\setminus \mathbf{C^n_m}$, $n\ge 3$, $1\le m<n-1$, is different, which proves the following

 \begin{lemma}\label{theor02} \textup{(\cite{Osi})}
There exist domains in the space $\mathbb{R}^n$, $n\ge 3$, of the class $\mathbf{WC^n_m}\setminus \mathbf{C^n_m}$, $1\le m<n-1$.
\end{lemma}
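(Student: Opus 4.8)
The plan is to produce the required domains explicitly, after first reducing to the lowest relevant case. The main reduction is a ``lifting'' step: if $D\subset\mathbb{R}^{k}$ is a weakly $m$-convex domain that fails to be $m$-convex, with $m$-nonconvexity point $x_{0}$, then $D\times\mathbb{R}\subset\mathbb{R}^{k+1}$ is a weakly $(m+1)$-convex domain that fails to be $(m+1)$-convex, the point $(x_{0},0)$ being an $(m+1)$-nonconvexity point. Connectedness and openness of $D\times\mathbb{R}$ are clear; weak $(m+1)$-convexity is immediate, since a boundary point of $D\times\mathbb{R}$ has the form $(p,t)$ with $p\in\partial D$, and if $L$ is an $m$-plane through $p$ missing $D$ then $L\times\mathbb{R}$ is an $(m+1)$-plane through $(p,t)$ missing $D\times\mathbb{R}$. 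For the nonconvexity point, I would split an $(m+1)$-plane $\Pi$ through $(x_{0},0)$ into two cases: if $\Pi$ contains the last coordinate direction then $\Pi=\Pi'\times\mathbb{R}$ with $\Pi'$ an $m$-plane through $x_{0}$, so $\Pi'$ meets $D$ and $\Pi$ meets $D\times\mathbb{R}$; otherwise $\Pi$ projects bijectively onto an $(m+1)$-plane through $x_{0}$ in $\mathbb{R}^{k}$, which contains an $m$-plane through $x_{0}$, hence meets $D$, hence $\Pi$ meets $D\times\mathbb{R}$. Iterating, it suffices to construct, for every $n\ge 3$, a weakly $1$-convex domain in $\mathbb{R}^{n}$ that is not $1$-convex: for general $1\le m\le n-2$ one takes $G\times\mathbb{R}^{m-1}$, where $G\subset\mathbb{R}^{n-m+1}$ (note $n-m+1\ge 3$) is such an example and the lifting step is applied $m-1$ times.

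For the remaining construction of a weakly $1$-convex, non-$1$-convex domain $G\subset\mathbb{R}^{n}$, my approach is to build $G$ out of pieces on which lines lie, so that escape lines (in the sense of Definition~\ref{def3}) are available ``for free''. Concretely, take quadric hypersurfaces of Lorentzian type — the level sets $\{Q=c\}$ of a form $Q(x)=x_{1}^{2}+\dots+x_{n-1}^{2}-x_{n}^{2}$, suitably recentred, which for $c>0$ are one-sheeted hyperboloids ruled by lines, and whose rulings moreover stay at distance $\ge\sqrt{c}$ from the axis of symmetry. If $G$ is essentially the open region between two nearby such level surfaces $\{Q=c_{1}\}$ and $\{Q=c_{2}\}$, then every boundary point $p$ of $G$ lies on one of these surfaces and the ruling of that surface through $p$ stays inside $\{Q=c_{i}\}$, hence misses $G$ — an escape line. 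Such a ``hyperboloidal shell'' is still $1$-convex, so the extra idea is to adjoin to $G$ a small amount of ``core'' material near the axis, just enough to intercept the remaining (necessarily near-axial) escape lines of a chosen complement point $x_{0}$, exploiting the uniform gap $\sqrt{c_{1}}$ between the inner rulings and the axis so as to keep this core away from all ruling escape lines of the shell's boundary.

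The hard part — and the technical core of the proof — is reconciling these two demands. Turning $x_{0}$ into a $1$-nonconvexity point forces $G$ to occupy part of the thin tube of directions along which $x_{0}$ would otherwise escape; but the boundary points of $G$ that this added core creates near the axis have, for geometric reasons, only near-axial lines as escape-line candidates, and those tend to run straight back into the added core. Overcoming this will require a precise quantitative choice — of the two level surfaces, and of the shape, height, radius and thickness of the adjoined core (for instance placing it slightly away from the ``waist'' level so that the rulings of the inner surface clear it, and possibly breaking it into several carefully positioned pieces so that its own boundary still sees past it) — together with a global verification that the escape line attached to each boundary point via the local ruled structure misses \emph{all} of $G$, not merely the adjacent sheet. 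Once such a configuration is exhibited in $\mathbb{R}^{n}$ for $m=1$, Lemma~\ref{theor02} follows for all admissible pairs $(n,m)$ by the reduction above.
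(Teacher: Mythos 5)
Your reduction step is fine: iterating ``$D\mapsto D\times\mathbb{R}$ raises $m$ by one while preserving weak convexity and the nonconvexity point'' is exactly the content of Lemma~\ref{pro3} (applied in one shot in the paper), and both the case split for an $(m+1)$-plane through $(x_0,0)$ and the escape-plane argument $L\times\mathbb{R}$ are correct. So, as you say, everything hinges on exhibiting a weakly $1$-convex, non-$1$-convex \emph{domain} in $\mathbb{R}^{n-m+1}$. That base case is precisely what you do not deliver. The hyperboloidal-shell-plus-core picture is left as a programme, and the step you yourself call ``the technical core'' --- choosing the core so that $x_0$ becomes a $1$-nonconvexity point while every boundary point created by the core still has a line missing \emph{all} of $G$ --- is where the entire difficulty of the lemma lives. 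The tension is not a detail one can defer: for a point $p$ strictly inside the inner surface $\{Q<c_1\}$, any line with direction $v$ satisfying $Q(v)>0$ has $Q(p+tv)\to+\infty$, hence by continuity passes through values in $(c_1,c_2)$ and meets the shell; so the only candidate escape lines for core boundary points are the near-axial ones (direction with $Q(v)\le 0$ and bounded maximum of the quadratic), which are exactly the lines the core is designed to obstruct near $x_0$. Whether several ``carefully positioned pieces'' can resolve this is plausible but unproven in your text, and without that verification there is no example.

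There is also a structural defect: the statement asks for \emph{domains}, but a core kept inside the cylinder of radius $\sqrt{c_1}$ about the axis (as you require, to clear the inner rulings) satisfies $Q<c_1$ and is therefore disjoint from the open shell $\{c_1<Q<c_2\}$, so the set you describe is disconnected; any connecting neck introduces new boundary points with their own escape-line obligations, which you do not address. For comparison, the actual proof (in \cite{Osi}, mirrored in Section~3 here) is fully explicit: the planar seed is three open trapezia around a triangle (Example~\ref{example4}), the passage to $\mathbb{R}^3$ is a product with an interval capped by prisms over the convex hull, connectedness is arranged by removing two oblique prisms while checking that the remaining boundary points keep escape lines, and only then does the product with $\mathbb{R}^{m-1}$ (your lifting) finish the argument for all $1\le m<n-1$. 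Until you produce a concrete configuration and verify the escape line of every boundary point against the whole set, your argument establishes only the reduction, not the lemma.
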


The following two lemmas are also proved in \cite{Osi} and will be used for the proof of the main results of this paper.
\begin{lemma}\label{pro4}\textup{(\cite{Osi})}
Let a closed set $E\subset\mathbb{R}^n$, $n\ge 2$, belong to the class $\mathbf{WC^n_m}\setminus \mathbf{C^n_m}$, $1\le m<n$. Then for any family of open, weakly
$m$-convex sets $E^k$, $k=1,2,\ldots$, approximating the set $E$ from the outside, there exists an index
$k_0\in\mathbb{N}$ such that every set $E^k$, $k=k_0,k_0+1,\ldots$, of the family is not $m$-convex.
\end{lemma}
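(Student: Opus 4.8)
The plan is to produce a single point that simultaneously witnesses the non‑$m$‑convexity of every set $E^k$ with sufficiently large index. Since $E\notin\mathbf{C^n_m}$, Definition~\ref{def2} supplies a point $x_0\in\mathbb{R}^n\setminus E$ with respect to which $E$ fails to be $m$‑convex; by Definition~\ref{def5} this is precisely the statement that $x_0\in(E)_m^{\triangle}$, i.e. every $m$‑dimensional plane passing through $x_0$ intersects $E$. This $x_0$ will be our universal obstruction.

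Next I would extract the combinatorial content of the approximation from the outside (Definition~\ref{def6}). The condition $\overline{E^{k+1}}\subset E^k$ gives in particular the nesting $E^{k+1}\subset E^k$, so the family $\{E^k\}$ is decreasing; together with $E=\bigcap_k E^k$ this yields $E\subset E^k$ for every $k$. Since $x_0\notin E=\bigcap_k E^k$, there is an index $k_0\in\mathbb{N}$ with $x_0\notin E^{k_0}$, and by monotonicity $x_0\notin E^k$ for all $k\ge k_0$.

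Finally, fix any $k\ge k_0$. An arbitrary $m$‑dimensional plane $L$ with $x_0\in L$ meets $E$, hence also meets the larger set $E^k\supset E$. Thus no $m$‑dimensional plane through $x_0$ is disjoint from $E^k$, which means $E^k$ is not $m$‑convex with respect to the point $x_0\in\mathbb{R}^n\setminus E^k$; by Definition~\ref{def2} this gives $E^k\notin\mathbf{C^n_m}$ for every $k\ge k_0$, as claimed. In short, the persistence is immediate because $(E)_m^{\triangle}\cap(\mathbb{R}^n\setminus E^k)\subseteq (E^k)_m^{\triangle}$ whenever $E\subset E^k$.

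There is essentially no obstacle here: the only point requiring a word of care is that the witnessing point $x_0$ of the non‑$m$‑convexity of $E$ genuinely escapes $E^k$ for all large $k$, and this is exactly what the defining relation $E=\bigcap_k E^k$ of an outside approximation guarantees. Note that neither the weak $m$‑convexity of $E$ nor that of the $E^k$ is used — only that $E$ is not $m$‑convex and that $\{E^k\}$ is a decreasing family with intersection $E$.
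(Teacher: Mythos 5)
Your proof is correct and complete: taking an $m$-nonconvexity point $x_0\in\mathbb{R}^n\setminus E$ guaranteed by $E\notin\mathbf{C^n_m}$, noting that $E=\bigcap_k E^k$ with the nested family forces $x_0\notin E^k$ for all $k\ge k_0$, and observing that every $m$-dimensional plane through $x_0$ meets $E\subset E^k$, is exactly the natural argument; your remark that weak $m$-convexity is never used is also accurate. The paper itself only quotes this lemma from \cite{Osi} without reproducing a proof, so there is nothing to contrast with beyond saying your route is the expected one.
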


\begin{lemma}\label{pro3}\textup{(\cite{Osi})}
Let $E^p\subset\mathbb{R}^p$, $p\ge 2$, be an open or a closed set of the class $\mathbf{WC^p_1}\setminus \mathbf{C^p_1}$. Then the set $E:=E^p\times\mathbb{R}^{n-p}\subset\mathbb{R}^n$, $n\ge3$, belongs to the class $\mathbf{WC^n_{n-p+1}}\setminus \mathbf{C^n_{n-p+1}}$.
\end{lemma}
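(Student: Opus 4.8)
The plan is to prove the two membership claims separately: that $E := E^p \times \mathbb{R}^{n-p}$ is weakly $(n-p+1)$-convex, and that it is not $(n-p+1)$-convex. Write $m := n-p+1$ and note that $2 \le p \le n$ forces $1 \le m < n$, so both classes are defined. Two elementary facts about products will be used repeatedly, and I would record them up front: $E = \pi^{-1}(E^p)$, where $\pi \colon \mathbb{R}^n = \mathbb{R}^p \times \mathbb{R}^{n-p} \to \mathbb{R}^p$ is the projection onto the first factor; $\partial E = (\partial E^p) \times \mathbb{R}^{n-p}$; and $\overline{E^p \times \mathbb{R}^{n-p}} = \overline{E^p} \times \mathbb{R}^{n-p}$.

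For weak $m$-convexity I would first dispose of the case where $E^p$ is open, so that $E$ is open. Given $x = (x', x'') \in \partial E = (\partial E^p) \times \mathbb{R}^{n-p}$, weak $1$-convexity of $E^p$ yields a line $\ell \subset \mathbb{R}^p$ with $x' \in \ell$ and $\ell \cap E^p = \varnothing$; then $L := \ell \times \mathbb{R}^{n-p}$ is an $m$-dimensional plane through $x$ with $L \cap E = (\ell \cap E^p) \times \mathbb{R}^{n-p} = \varnothing$, so $E$ is weakly $m$-convex. If instead $E^p$ is closed, take a family of open weakly $1$-convex sets $E^p_k$ approximating $E^p$ from the outside; by the open case each $E^p_k \times \mathbb{R}^{n-p}$ is open and weakly $m$-convex, and from $\overline{E^p_{k+1}} \subset E^p_k$ and $\bigcap_k E^p_k = E^p$ one checks $\overline{E^p_{k+1} \times \mathbb{R}^{n-p}} \subset E^p_k \times \mathbb{R}^{n-p}$ and $\bigcap_k (E^p_k \times \mathbb{R}^{n-p}) = E$, so this family approximates $E$ from the outside and $E \in \mathbf{WC^n_m}$.

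For the non-$m$-convexity, since $E^p \notin \mathbf{C^p_1}$ there is a point $y' \in \mathbb{R}^p \setminus E^p$ with $y' \in (E^p)^{\triangle}$, i.e. every line of $\mathbb{R}^p$ through $y'$ meets $E^p$. I would set $y := (y', 0) \in \mathbb{R}^n$; as $y' \notin E^p$ and $E = \pi^{-1}(E^p)$, we get $y \notin E$. Take now an arbitrary $m$-dimensional plane $L \ni y$. Its image $\pi(L)$ is an affine subspace of $\mathbb{R}^p$ containing $\pi(y) = y'$, and since the direction space of $L$ intersects $\ker \pi = \{0\} \times \mathbb{R}^{n-p}$ in a subspace of dimension at most $n-p$, one gets $\dim \pi(L) \ge \dim L - (n-p) = 1$. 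Hence $\pi(L)$ contains a line through $y'$, which meets $E^p$ by the choice of $y'$; so $\pi(L) \cap E^p \ne \varnothing$, and lifting a point of that intersection into $L$ shows $L \cap E \ne \varnothing$. Thus every $m$-plane through $y$ meets $E$, so $y \in (E)_m^{\triangle} \ne \varnothing$ and $E \notin \mathbf{C^n_m}$; together with the previous paragraph this gives $E \in \mathbf{WC^n_{n-p+1}} \setminus \mathbf{C^n_{n-p+1}}$.

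The weak-convexity half is a routine product computation; the only place requiring a little care is the dimension estimate $\dim \pi(L) \ge 1$ in the last paragraph, which is exactly what forces the projected plane to still contain a whole line through $y'$, so that the $1$-nonconvexity of $y'$ for $E^p$ transfers directly to $m$-nonconvexity of $y$ for $E$. I therefore expect the main (mild) obstacle to be the bookkeeping of the dimensions of $L$, $\ker\pi$ and their intersection, together with confirming that the chosen witness point $y$ lies outside $E$ in both the open and the closed case.
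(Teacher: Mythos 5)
Your proof is correct and complete: the product of the witness line with $\mathbb{R}^{n-p}$ handles weak $(n-p+1)$-convexity of the open case, the termwise product of an approximating family handles the closed case, and the projection argument with the dimension count $\dim \pi(L) \ge (n-p+1)-(n-p) = 1$ correctly transfers the $1$-nonconvexity point $y'$ of $E^p$ to an $(n-p+1)$-nonconvexity point $(y',0)$ of $E$. The present paper does not reprove this lemma (it is quoted from \cite{Osi}), but your argument is the natural one for such product sets and matches the standard approach, so no discrepancy arises.
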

The examples of open and closed sets of the class $\mathbf{WC^n_{n-1}}\setminus \mathbf{C^n_{n-1}}$ with three and more connected components are constructed in \cite{Osi} (see Examples 1--4, \cite{Osi}). It is also proved in \cite{Osi} that the compact sets of the class $\mathbf{WC^n_{n-1}}\setminus \mathbf{C^n_{n-1}}$ consist of not less than three connected components.

The present work proceeds  the research of Yu. Zelinskii  and his students by investigating the topological properties mainly of closed sets of the classes  $\mathbf{WC^n_m}\setminus \mathbf{C^n_m}$, $n\ge 2$, $1\le m<n$. In particular, answers are given to some of the questions posed in \cite{Osi}. Namely,  in chapter 2 it is proved that not only compact set but any closed set of the class $\mathbf{WC^n_{n-1}}\setminus \mathbf{C^n_{n-1}}$ consists of not less than three connected components. It is also proved that the interior of a closed, weakly $1$-convex set with a finite number of components in the plane is weakly $1$-convex. In chapter 3 domains and closed connected sets of the classes $\mathbf{WC^n_m}\setminus \mathbf{C^n_m}$, $n\ge 3$, $1\le m<n-1$, are constructed.

\section{Topological properties of closed sets of the class $\mathbf{WC^n_m}\setminus \mathbf{C^n_m}$, $n\ge 2$, $1\le m< n$}
First, give some denotations.  The interval between points $x,y\in\mathbb{R}^n$ will be written as
$xy$ and the distance between the points  will be written as $|x-y|$. Let $U(y):=\{x\in\mathbb{R}^n:|x-y|<\varepsilon\}$, $\varepsilon>0$, be a neighborhood of a point
$y\in\mathbb{R}^n$.


\begin{lemma}\label{lemm2}
Let a closed, weakly $m$-convex set $E\subset\mathbb{R}^n$, $n\ge 2$, $1\le m<n$, with the number of components $N$ be given. Then $E$ is approximated from the outside by a family of open, weakly $m$-convex sets $E^k$, $k=1,2,\ldots$, such that the number of components of each set $E^k$ is not greater than $N$.
\end{lemma}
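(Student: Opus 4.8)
The starting point is that $E$ is closed and weakly $m$-convex, so by Definition~\ref{def4} there is \emph{some} family of open weakly $m$-convex sets $\widetilde E^j$, $j=1,2,\ldots$, approximating $E$ from the outside. The difficulty is that such a family may have far more components than $E$ does (extra components that are shrinking toward nothing, or thin bridges linking components of $E$). The plan is to start from an arbitrary approximating family and then \emph{modify} each $\widetilde E^j$ so as to cut down the number of components to at most $N$, while preserving weak $m$-convexity and the nesting/intersection conditions of Definition~\ref{def3}.

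First I would set up the geometry. Write $E=E_1\sqcup\cdots\sqcup E_N$ for the (finitely many) connected components of $E$; since $E$ is closed and the components are finitely many, they are pairwise at positive distance, say $3\delta:=\min_{i\ne l}\mathrm{dist}(E_i,E_l)>0$. For each $i$ let $V_i$ be the open $\delta$-neighborhood of $E_i$; the $V_i$ are pairwise disjoint open sets, each containing exactly one component of $E$, and $V:=\bigcup_i V_i$ is an open neighborhood of $E$ with exactly $N$ components. Now define
$$
E^k:=\widetilde E^{\,j(k)}\cap V,
$$
where $j(k)$ is chosen large enough that $\widetilde E^{\,j(k)}\subset V$ (possible because $E=\bigcap_j\widetilde E^j\subset V$ and the $\overline{\widetilde E}^{\,j+1}\subset\widetilde E^j$ are nested, so eventually they lie in any neighborhood of $E$) and, additionally, $j(k)$ strictly increasing with $j(k+1)$ large enough that $\overline{E^{k+1}}\subset E^k$. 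Then each $E^k$ is open; $\overline{E^{k+1}}\subset\overline{\widetilde E}^{\,j(k+1)}\subset\widetilde E^{\,j(k)}$ and $\overline{E^{k+1}}\subset V$ (shrink $V$ slightly, or use that $E^{k+1}\subset V_i$'s which have the bigger $\delta$-collar available), so $\overline{E^{k+1}}\subset E^k$; and $\bigcap_k E^k=\bigl(\bigcap_k\widetilde E^{\,j(k)}\bigr)\cap V=E\cap V=E$. Since each $E^k\subset V$ and $V$ has exactly $N$ components that are separated open sets, $E^k=\bigsqcup_i(E^k\cap V_i)$ and hence $E^k$ has at most $N$ components.

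It remains to check that $E^k$ is still weakly $m$-convex, i.e. $m$-convex with respect to every boundary point $x\in\partial E^k$. The key observation is that the boundary of $E^k$ decomposes as $\partial E^k\subset(\partial\widetilde E^{\,j(k)}\cap \overline V)\,\cup\,(\partial V\cap\overline{\widetilde E}^{\,j(k)})$. If $x\in\partial\widetilde E^{\,j(k)}$, then by weak $m$-convexity of $\widetilde E^{\,j(k)}$ there is an $m$-plane $L\ni x$ with $L\cap\widetilde E^{\,j(k)}=\varnothing$, and a fortiori $L\cap E^k=\varnothing$. If instead $x\in\partial V$ (so $x\notin E$), then $x$ lies on the boundary of exactly one ball-like component collar $V_i=U_\delta(E_i)$, which is \emph{convex}-like only if $E_i$ is a point; in general $V_i$ need not be convex, so this is the main obstacle and must be handled with a little more care. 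I would remedy it by choosing the neighborhoods $V_i$ to be \emph{open convex} sets with pairwise disjoint closures (possible since the $E_i$ are compact and pairwise disjoint, provided $E$ is bounded; in the unbounded case one takes $V_i$ to be a locally finite union of convex pieces, or intersects with large balls and argues componentwise). For convex $V_i$, any boundary point $x\in\partial V_i$ has a supporting hyperplane $H$ of $V_i$ with $V_i$ on one side; then pick any $m$-plane $L\subset H$ through $x$ (possible since $m\le n-1=\dim H$), and $L\cap V_i=\varnothing$, hence $L\cap E^k\subset L\cap V=L\cap\bigsqcup V_l$; the planes through $x$ can be chosen inside $H$ and, shrinking if needed so that $L$ stays near $x$ and misses the other (far-away) $V_l$, we get $L\cap E^k=\varnothing$. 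Thus $E^k$ is weakly $m$-convex, and the family $\{E^k\}$ witnesses that $E$ is approximated from the outside by open weakly $m$-convex sets each with at most $N$ components, completing the proof. The one genuinely delicate point, as noted, is ensuring the separating neighborhoods can be taken convex (or at least locally convex with boundaries admitting the required $m$-planes) in the possibly unbounded case; everything else is routine bookkeeping with the nesting conditions.
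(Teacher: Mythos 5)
Your construction does not achieve the component bound, and that is the heart of the lemma. If you choose $j(k)$ so large that $\widetilde E^{\,j(k)}\subset V$, then $E^k=\widetilde E^{\,j(k)}\cap V=\widetilde E^{\,j(k)}$ and nothing has been gained; in general the decomposition $E^k=\bigsqcup_i(E^k\cap V_i)$ only splits $E^k$ into $N$ open pieces, and each piece $E^k\cap V_i$ may itself have many components (an approximating set can contain small extra components that meet no point of $E$ but still lie well inside $V_i$, and intersecting with $V$ does nothing to remove them). So the step ``hence $E^k$ has at most $N$ components'' is false. There are further problems in the auxiliary claims: finitely many components of a closed set in $\mathbb{R}^n$ need not be at positive mutual distance when $E$ is unbounded (take the $x$-axis together with the graph of $e^{-x}$, $x\ge 0$); pairwise disjoint \emph{convex} neighborhoods $V_i$ need not exist at all (one component may lie inside the convex hull of another, e.g.\ a disk encircled by an annulus); and the repair at points $x\in\partial V$ is incoherent, because an $m$-dimensional plane is unbounded, so it cannot be ``shrunk to stay near $x$'' --- a plane lying in a supporting hyperplane of $V_i$ can perfectly well run through some other $V_l$ and hence through $E^k$.

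The missing idea, which is how the paper argues, is to modify the approximating sets by \emph{discarding components} rather than by intersecting with a neighborhood: given any approximating family $G^k$ of open weakly $m$-convex sets, let $E^k$ be the union of those components of $G^k$ that contain points of $E$. Then $E^k$ is open, $\partial E^k\subset\partial G^k$, and the $m$-plane through a boundary point that misses $G^k$ a fortiori misses $E^k\subset G^k$, so weak $m$-convexity is inherited with no boundary case analysis. Each component of $E$ is connected, hence lies in exactly one component of $G^k$; therefore every component of $E^k$ contains at least one whole component of $E$, which gives the bound $N$. Finally $\overline{E^{k+1}}\subset\overline{G^{k+1}}\subset G^k$, and since each component of $E^{k+1}$ meets $E$, its closure lies in a component of $G^k$ meeting $E$, so $\overline{E^{k+1}}\subset E^k$; and $\bigcap_k E^k=E$ follows because every $x\in E$ lies in the component of $G^k$ containing it. This route avoids all the metric and convexity issues your neighborhoods $V_i$ create.
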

\begin{proof}[Proof.]  Since $E$ is  weakly $m$-convex, there exists a family of open weakly $m$-convex sets $G^k$, $k=1,2,\ldots$, approximating  $E$ from the outside.
Let every set $E^k$, $k=1,2,\ldots$, consist only of the components of $G^k$ containing points of $E$. Consider a point $y_k\in \partial E^k$. Then $y_k\in \partial G^k$. Since $G^k$ is open and weakly $m$-convex, there exists an $m$-dimensional plane $L_{y_k}$ passing through ${y_k}$ and such that $L_{y_k}\cap G^k\ne \varnothing$. Since $G^k\supset E^k$, then $L_{y_k}\cap E^k\ne \varnothing$. Thus, any set  $E^k$, $k=1,2,\ldots$,  is open, weakly $m$-convex, and consists of components the number of which is not greater than $N$.

Since $G^k\supset \overline{G^{k+1}} \supset \overline{E^{k+1}}$ and $\overline{E^{k+1}}$ is contained only in those components of $G^k$ which contain points of $E$, then $E^k\supset \overline{E^{k+1}}$. Let us prove that $E=\cap_kE_k$.

Suppose $x\in \cap_kE_k$, then $x\in E_k$ for any $k=1,2,\ldots$. Since $E_k\subset G_k$, then $x\in G_k$ for any $k=1,2,\ldots$. Therefore, $x\in\cap_kG_k=E$. Now let $x\in E$. Since $G_k\supset E$, $k=1,2,\ldots$, the point $x$ belongs to some component $G^0_k$ of $G_k$ for any $k=1,2,\ldots$.  Then $x\in G^0_k\subset E_k$, $k=1,2,\ldots$, which gives $x\in \cap_kE_k$.

Thus, $E$ is approximated  from the outside by the family of open sets $E_k$, $k=1,2,\ldots$, by Definition \ref{def6}.
\end{proof}

\begin{theorem}\label{corol6}
Let a closed set $E\subset\mathbb{R}^n$, $n\ge 2$, of the class $\mathbf{WC^n_m}\setminus \mathbf{C^n_m}$, $1\le m<n$, with the number of components $N$ be given. Then $E$ is approximated from the outside by a family of open sets $E^k$, $k=1,2,\ldots$, of the class $\mathbf{WC^n_m}\setminus \mathbf{C^n_m}$ such that the number of components of each set $E^k$ is not greater than $N$.
\end{theorem}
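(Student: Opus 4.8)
The plan is to combine Lemma~\ref{lemm2} with Lemma~\ref{pro4}. By Lemma~\ref{lemm2}, since $E$ is a closed weakly $m$-convex set with $N$ components, there exists a family of open, weakly $m$-convex sets $E^k$, $k=1,2,\ldots$, approximating $E$ from the outside, each of which has at most $N$ components. The only thing left to verify is that, from some index on, these $E^k$ fail to be $m$-convex, so that (after discarding finitely many initial terms and re-indexing) we obtain a family entirely inside $\mathbf{WC^n_m}\setminus\mathbf{C^n_m}$.

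First I would recall that the hypothesis $E\in\mathbf{WC^n_m}\setminus\mathbf{C^n_m}$ means $E$ is closed, weakly $m$-convex, and \emph{not} $m$-convex. Then I would apply Lemma~\ref{pro4} directly to the family $\{E^k\}$ produced by Lemma~\ref{lemm2}: that lemma guarantees an index $k_0\in\mathbb{N}$ such that for every $k\ge k_0$ the set $E^k$ is not $m$-convex. Hence for $k\ge k_0$ each $E^k$ belongs to $\mathbf{WC^n_m}\setminus\mathbf{C^n_m}$ while still having at most $N$ components.

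Finally I would note that passing to the subfamily $\widetilde{E}^j:=E^{k_0+j-1}$, $j=1,2,\ldots$, preserves the approximation-from-the-outside property in the sense of Definition~\ref{def6}: the nesting $\overline{\widetilde{E}^{\,j+1}}\subset\widetilde{E}^{\,j}$ is inherited from the original family, and $\bigcap_j\widetilde{E}^{\,j}=\bigcap_{k\ge k_0}E^k=\bigcap_k E^k=E$ because dropping finitely many sets from a decreasing intersection does not change it. Thus $E$ is approximated from the outside by the family $\{\widetilde{E}^{\,j}\}$ of open sets of the class $\mathbf{WC^n_m}\setminus\mathbf{C^n_m}$, each with at most $N$ components, which is the assertion.

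I do not anticipate a serious obstacle here: the theorem is essentially a bookkeeping combination of two already-established lemmas, and the only point requiring a line of justification is that truncating the family to indices $k\ge k_0$ and re-indexing does not disturb Definition~\ref{def6}. One should just be careful to state explicitly that Lemma~\ref{pro4} applies because the family $\{E^k\}$ from Lemma~\ref{lemm2} is a bona fide outside-approximating family of open weakly $m$-convex sets for the closed set $E\in\mathbf{WC^n_m}\setminus\mathbf{C^n_m}$.
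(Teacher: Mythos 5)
Your proposal is correct and follows essentially the same route as the paper: apply Lemma~\ref{lemm2} to get an outside-approximating family of open weakly $m$-convex sets with at most $N$ components, invoke Lemma~\ref{pro4} to find $k_0$ beyond which they are not $m$-convex, and re-index from $k_0$ onward. Your explicit check that truncation and re-indexing preserve Definition~\ref{def6} is a point the paper leaves implicit, but there is no substantive difference.
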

\begin{proof}[Proof.]
By Lemma \ref{lemm2},  $E$ is approximated from the outside by a family of open, weakly $m$-convex sets $G^k$, $k=1,2,\ldots$, such that the number of components of each set $G^k$ is not greater than $N$. By Lemma \ref{pro4}, there exists an index $k_0$ such that every set $G^k$, $k=k_0,k_0+1,\ldots$, belongs to the class $\mathbf{WC^n_m}\setminus \mathbf{C^n_m}$. Thus, $E$ is approximated from the outside by the family of sets
$$
E^k:=G^{k_0+(k-1)},\quad k=1,2,\ldots,
$$
satisfying the theorem conditions.
\end{proof}

\begin{theorem}\label{theortheor}
Let a closed set $E\subset \mathbb{R}^n$ belong to the class $\mathbf{WC^n_{n-1}}\setminus \mathbf{C^n_{n-1}}$. Then $E$ consists of
not less than three components.
\end{theorem}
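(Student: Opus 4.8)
The strategy is to reduce the closed case to the known open case (Lemma~\ref{theor01}) by means of Theorem~\ref{corol6}. Suppose, for contradiction, that $E$ is a closed set of the class $\mathbf{WC^n_{n-1}}\setminus\mathbf{C^n_{n-1}}$ with at most two components, i.e. $N\le 2$. By Theorem~\ref{corol6}, $E$ is approximated from the outside by a family of open sets $E^k$, $k=1,2,\ldots$, each belonging to $\mathbf{WC^n_{n-1}}\setminus\mathbf{C^n_{n-1}}$ and having at most $N\le 2$ components. But Lemma~\ref{theor01} asserts that every open set of the class $\mathbf{WC^n_{n-1}}\setminus\mathbf{C^n_{n-1}}$ has at least three components. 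This contradicts the existence of even a single $E^k$, so $N\ge 3$.

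\textbf{On the possible gap.} The one point that deserves care is whether $N$, the number of components of the closed set $E$, can be infinite, since Theorem~\ref{corol6} is phrased for a set ``with the number of components $N$'' and the bound ``not greater than $N$'' is vacuous when $N=\infty$. If $E$ has infinitely many components the claimed conclusion ($N\ge 3$) is trivially true, so we may assume $N$ finite from the outset and the argument above applies verbatim. Thus the only substantive input is the combination of Lemma~\ref{pro4} (which upgrades the approximating family to one consisting of genuinely non-$m$-convex sets, used inside Theorem~\ref{corol6}) and Lemma~\ref{theor01}.

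\textbf{Main obstacle.} There is essentially no obstacle beyond correctly invoking Theorem~\ref{corol6}: the real work has already been done in establishing that the approximating open sets can be taken both in $\mathbf{WC^n_{n-1}}\setminus\mathbf{C^n_{n-1}}$ (Lemma~\ref{pro4}) and with no more components than $E$ (Lemma~\ref{lemm2}). Once those two facts are in hand, the three-component lower bound for closed sets is immediate from the corresponding bound for open sets. I would therefore present the proof in two lines: assume $N\le 2$, apply Theorem~\ref{corol6} to get an open $E^1\in\mathbf{WC^n_{n-1}}\setminus\mathbf{C^n_{n-1}}$ with at most two components, and contradict Lemma~\ref{theor01}.
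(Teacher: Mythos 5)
Your proposal is correct and follows essentially the same route as the paper: assume at most two components, invoke Theorem~\ref{corol6} to get an outside approximation by open sets of the class $\mathbf{WC^n_{n-1}}\setminus\mathbf{C^n_{n-1}}$ with at most two components, and contradict Lemma~\ref{theor01} (the paper merely splits this into the two cases $N=1$ and $N=2$). Your extra remark that the case of infinitely many components is trivially covered is a sensible clarification but does not change the argument.
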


\begin{proof}[Proof.]

Suppose $E$ is connected. Then, by Theorem \ref{corol6},  it can be approximated from the
outside by a family of domains $E^k$, $k=1,2,\ldots$, of the class $\mathbf{WC^n_{n-1}}\setminus \mathbf{C^n_{n-1}}$.  But this contradicts Lemma \ref{theor01}. Thus, $E$ is disconnected.

Suppose $E$ consists of two components.  By Theorem \ref{corol6},  it can be approximated from the outside by a family of open sets $E^k$, $k=1,2,\ldots$, of the class $\mathbf{WC^n_{n-1}}\setminus \mathbf{C^n_{n-1}}$ consisting
of one or two components. This contradicts Lemma \ref{theor01}. Thus, $E$
consists of more than two components. Examples 1--4 in \cite{Osi} complete the proof.
\end{proof}

\begin{definition}\label{def8}
The set of all points of the straight lines  passing through a point $x\in\mathbb{R}^n\setminus A$ and intersecting a set $A\subset\mathbb{R}^n$ is called the {\bf\emph{cone of the set $A$ with respect to the point $x$}} and is denoted by $\mathrm{C}_xA$. We suppose that $x\notin \mathrm{C}_xA$ whenever $A$ is open and $x\in\mathrm{C}_xA$ otherwise.
\end{definition}
\begin{definition}
The set of all points of the rays starting at a point $x\in\mathbb{R}^n\setminus A$ and passing through a set $A\subset\mathbb{R}^n$ is called the {\bf\emph{semicone of the set $A$ with respect to the point $x$}} and is denoted by $\mathrm{S}_xA$. We suppose that $x\notin \mathrm{S}_xA$ whenever $A$ is open and $x\in\mathrm{S}_xA$ otherwise.
\end{definition}

\begin{lemma} \label{lemm12}
Let a set $E\subset \mathbb{R}^2$ be open and convex and let $x\in \mathbb{R}^2\setminus E$. Then  $\mathrm{S}_xE$ is an open angle of value not greater than $\pi$.
\end{lemma}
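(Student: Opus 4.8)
The plan is to pass from $\mathrm{S}_xE$ to the corresponding set of directions. Fix the vertex $x$ and, for a unit vector $u\in S^1$, call $u$ \emph{admissible} if the ray $\{x+tu:t>0\}$ meets $E$; let $D\subset S^1$ be the set of admissible directions. Since $x\notin E$, a point $p\in E$ determines the unit vector $(p-x)/|p-x|$, and conversely a ray in direction $u$ meets $E$ iff $u\in D$; hence $D=\{(p-x)/|p-x|:p\in E\}$ and
$$\mathrm{S}_xE=\{x+tu:\ t>0,\ u\in D\}.$$
So it suffices to prove that $D$ is empty or an open arc of $S^1$ of length at most $\pi$: the displayed formula then presents $\mathrm{S}_xE$ as an open angular sector with vertex $x$ and opening at most $\pi$ (a half-plane in the extreme case $\pi$), and in particular as an open set, since polar coordinates around $x$ are a diffeomorphism on $\{t>0\}$ and carry the open set $(0,\infty)\times D$ onto $\mathrm{S}_xE$. (If $E=\varnothing$ the statement is trivial, so one may assume $E\ne\varnothing$.)

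Next I would establish the two qualitative properties of $D$. Openness: if $p\in E$ then a whole ball $U(p)\subset E$ by openness of $E$, and the map $q\mapsto(q-x)/|q-x|$ is continuous near $p$ (as $p\ne x$), so an arc around $(p-x)/|p-x|$ lies in $D$. Connectedness: $E$ is convex, hence connected, and $D$ is its image under that same continuous map. A connected open subset of $S^1$ is either empty, all of $S^1$, or an open arc, so it remains only to exclude $D=S^1$ and to bound the arc length by $\pi$ — and both are consequences of the same observation.

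The crux is the length bound. Suppose the arc $D$ had length greater than $\pi$ (this includes $D=S^1$). Then $D$ contains a pair of antipodal directions $u$ and $-u$, so there exist $p_1,p_2\in E$ with $(p_1-x)/|p_1-x|=-(p_2-x)/|p_2-x|$, i.e. $p_1-x=-\lambda(p_2-x)$ for some $\lambda>0$. Rearranging gives $x=\tfrac{1}{1+\lambda}\,p_1+\tfrac{\lambda}{1+\lambda}\,p_2$, a convex combination of $p_1,p_2\in E$, so $x\in E$ by convexity, contradicting $x\notin E$. Hence $D\ne S^1$ and $D$ is an open arc of length at most $\pi$, which finishes the proof.

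I do not expect a serious obstacle here: the argument is essentially a direct computation once one passes to directions. The only points needing mild care are the standard topological reduction "connected open subset of $S^1$ $\Rightarrow$ open arc" and the elementary geometric fact that an arc longer than $\pi$ contains an antipodal pair; note also that convexity (not openness) of $E$ drives the bound, while openness is used only to make the resulting angle open and to ensure $x\notin\mathrm{S}_xE$.
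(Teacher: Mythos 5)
Your proposal is correct and follows essentially the same route as the paper: you show $\mathrm{S}_xE$ is an open, connected angular sector (the paper asserts this directly from openness and connectedness of $E$; you formalize it via the circle of directions), and you obtain the bound $\le\pi$ by the same key step — an opening greater than $\pi$ would yield points of $E$ on two complementary rays from $x$, so $x$ would be a convex combination of points of $E$, contradicting $x\notin E$. Your write-up merely supplies more detail (the reduction to an arc in $S^1$ and the antipodal-pair observation) than the paper's terser version.
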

\begin{proof}
Since $E$ is open and connected, $\mathrm{S}_xE$ is also open and connected. Therefore, $\mathrm{S}_xE$ is an open angle. Suppose its value is greater than $\pi$. Then there exists a line $\gamma(x)$ passing through $x$ and such that $\gamma(x)\subset \mathrm{S}_xE$. Let $\eta_x^1$, $\eta_x^2$ be the complementary rays starting at $x$ in $\gamma(x)$. There exist points $x^1\in E\cap\eta_x^1$, $x^2\in E\cap\eta_x^2$ by the definition of $\mathrm{S}_xE$.  Then the set $E$ is not convex, since  $x\in \overline{x^1x^2}$ and  $x\not\in E$. We have now reached a contradiction. Thus, the assumption is wrong, and the value of $\mathrm{S}_xE$ is not greater than $\pi$.
\end{proof}

\begin{corollary}\label{corollary5}
Let a set $E\subset \mathbb{R}^2$ be open and convex and $x\in \mathbb{R}^2\setminus E$. Then $\mathrm{C}_xE$ is the union of two vertical open angles of value $\le\pi$.
\end{corollary}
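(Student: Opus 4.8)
The plan is to derive the statement about $\mathrm{C}_xE$ directly from Lemma \ref{lemm12}, which already tells us that $\mathrm{S}_xE$ is an open angle $\alpha$ with aperture at most $\pi$. First I would recall that, by Definition \ref{def8}, the cone $\mathrm{C}_xE$ consists of all points lying on the \emph{full lines} through $x$ that meet $E$, whereas the semicone $\mathrm{S}_xE$ consists of the points on the \emph{rays} from $x$ meeting $E$. Hence a line through $x$ contributes to $\mathrm{C}_xE$ precisely when at least one of its two complementary rays (starting at $x$) meets $E$, i.e. precisely when that ray lies in $\mathrm{S}_xE$. This gives the set-theoretic identity $\mathrm{C}_xE = \mathrm{S}_xE \cup (-\mathrm{S}_xE)$, where $-\mathrm{S}_xE$ denotes the reflection of $\mathrm{S}_xE$ through the point $x$ (the union of the rays opposite to those in $\mathrm{S}_xE$).

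Next I would observe that the reflection of an open angle of aperture $\beta\le\pi$ with vertex $x$ through $x$ is again an open angle with vertex $x$ and the same aperture $\beta$, and that it is the \emph{vertical} angle to the original one (its bounding rays are the opposite rays of the original bounding rays). Writing $\mathrm{S}_xE$ as the open angle between rays $\eta^1_x$ and $\eta^2_x$, the set $-\mathrm{S}_xE$ is the open angle between the opposite rays $-\eta^1_x$ and $-\eta^2_x$; these two angles are vertical to each other. Therefore $\mathrm{C}_xE = \mathrm{S}_xE \cup (-\mathrm{S}_xE)$ is exactly the union of two vertical open angles, each of aperture at most $\pi$, which is the assertion.

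The only point requiring a little care — and the mild obstacle here — is the degenerate situation where $\mathrm{S}_xE$ has aperture exactly $\pi$ or where $E$ lies "on one side" in a way that makes $\mathrm{S}_xE$ a half-plane; one should check that the description still reads correctly (the two vertical angles may then together form a full punctured plane, or a half-plane and its vertical half-plane, which is consistent with "two vertical open angles of value $\le\pi$"). One should also note the edge case $x\in\partial E$: since $E$ is open, $x\notin E$ is still allowed and $\mathrm{S}_xE$ remains a well-defined open angle by Lemma \ref{lemm12}, so nothing changes. Apart from these remarks, the corollary is an immediate consequence of the reflection symmetry relating cones to semicones together with Lemma \ref{lemm12}.
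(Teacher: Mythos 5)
Your proposal is correct and follows essentially the same route the paper intends: the corollary is deduced from Lemma~\ref{lemm12} via the identity $\mathrm{C}_xE=\mathrm{S}_xE\cup(-\mathrm{S}_xE)$, the reflection through $x$ turning the open angle $\mathrm{S}_xE$ into its vertical angle of the same aperture. Only your parenthetical remark about the aperture-$\pi$ case is slightly off (two opposite open half-planes unite to the plane minus the bounding line, not a punctured plane), but this side comment plays no role in the argument.
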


\begin{theorem}\label{theor4}
Let $E\subset \mathbb{R}^2$ be a closed set with a finite number of components and such that $\mathrm{Int}\, E\ne\varnothing$. If $E$ is weakly $1$-convex, then $\mathrm{Int}\, E$ is weakly $1$-convex.
\end{theorem}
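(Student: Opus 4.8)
The plan is to argue by contradiction: suppose $\mathrm{Int}\,E$ is not weakly $1$-convex, so there is a boundary point $x_0 \in \partial(\mathrm{Int}\,E)$ which is a $1$-nonconvexity point of $\mathrm{Int}\,E$, i.e. every line through $x_0$ meets $\mathrm{Int}\,E$. Since $E$ is closed, $\partial(\mathrm{Int}\,E)\subset E$, so $x_0\in E$; in fact $x_0\in\partial E$ because a point of $\mathrm{Int}\,E$ cannot be a boundary point of $\mathrm{Int}\,E$. The goal is to derive from this a point (near $x_0$, or $x_0$ itself perturbed slightly outward) that is a $1$-nonconvexity point of $E$, contradicting the weak $1$-convexity of $E$ via the approximating family. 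More precisely, since $E\in\mathbf{WC}^2_1$ and $x_0\in\partial E$, for every approximating open weakly $1$-convex set $G^k$ there is a line $\ell_k$ through $x_0$ (or through a nearby boundary point of $G^k$) missing $G^k\supset E$; I want to show this is incompatible with every line through $x_0$ hitting $\mathrm{Int}\,E$.

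First I would set up the local picture at $x_0$. Because $E$ has finitely many components, only finitely many of them, say $E_1,\dots,E_r$, can have $x_0$ in their closure, and I may assume $x_0\in E_1$ with $\mathrm{Int}\,E_1\neq\varnothing$ accumulating at $x_0$ (the component containing $x_0$; if several contribute, handle them together). The hypothesis "every line through $x_0$ meets $\mathrm{Int}\,E$" says the semicone $\mathrm{S}_{x_0}(\mathrm{Int}\,E)$ is all of $\mathbb{R}^2\setminus\{x_0\}$ — every direction from $x_0$, forward or backward, reaches $\mathrm{Int}\,E$. Now use the weak $1$-convexity of $E$: take the approximating family $G^k$; by Lemma~\ref{lemm2} each $G^k$ has at most $N$ components, and since $x_0\in\partial E\subset G^k$ eventually (actually $x_0\in E\subset G^k$), $x_0$ is an interior point of $G^k$, so there is no line through $x_0$ missing $G^k$. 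Instead I must look at boundary points of $G^k$ converging to $x_0$: pick $x_0\in\partial E$, so there are points $z_k\notin E$, $z_k\to x_0$; pushing $z_k$ out of $G^k$ if necessary, take $y_k\in\partial G^k$ with $y_k\to x_0$, and a line $\ell_k$ through $y_k$ with $\ell_k\cap G^k=\varnothing$, hence $\ell_k\cap E=\varnothing$. Passing to a subsequence, $\ell_k$ converges to a line $\ell_0$ through $x_0$ with $\ell_0\cap\mathrm{Int}\,E=\varnothing$ (a line disjoint from each closed $E$, limiting, can only pick up boundary points of $E$, not interior points — here is where openness of $\mathrm{Int}\,E$ is used, together with $E$ closed). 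This contradicts the choice of $x_0$ as a $1$-nonconvexity point of $\mathrm{Int}\,E$.

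The step I expect to be the main obstacle is the limiting argument: ensuring that the limit line $\ell_0$ through $x_0$ genuinely avoids $\mathrm{Int}\,E$. The lines $\ell_k$ avoid the closed set $E$, but a limit of lines avoiding a closed set need not avoid that closed set — it may become tangent. However, if $\ell_0$ met $\mathrm{Int}\,E$, it would meet it in an open set, so a whole segment of $\ell_0$ lies in $\mathrm{Int}\,E\subset E$; since $\ell_k\to\ell_0$ and $\mathrm{Int}\,E$ is open, $\ell_k$ would meet $\mathrm{Int}\,E$ for large $k$, contradicting $\ell_k\cap E=\varnothing$. So in fact $\ell_0\cap\mathrm{Int}\,E=\varnothing$, and this is exactly the needed contradiction. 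A secondary technical point is the passage from "$x_0\in\partial E$, every line through $x_0$ meets $\mathrm{Int}\,E$" to producing the boundary points $y_k\in\partial G^k$ near $x_0$ with a missing line: one uses that $x_0\in\partial E=\bigcap_k\partial(\text{something})$ is approximated by complement points $z_k\to x_0$, $z_k\notin G^{k}$ for suitable indices (since $E=\bigcap G^k$ and $z_k\notin E$ means $z_k\notin G^{k}$ for $k$ large depending on $z_k$ — reindex to get one $z_k$ per $k$), then the segment from $z_k$ to a point of $E$ crosses $\partial G^k$. Here Corollary~\ref{corollary5} and the finiteness of the number of components let me control $\mathrm{C}_{y_k}G^k$ componentwise and guarantee the selected line through $y_k$ can be taken to converge.
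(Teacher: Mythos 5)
Your argument is correct, but it follows a genuinely different route from the paper's. The paper also starts from a $1$-nonconvexity point $y\in\partial E$ of $\mathrm{Int}\,E$, but then works \emph{inside} $E$: it chooses points $x_i^1,x_i^2\in E_i$ on the boundary lines of slightly reduced cones $\mathrm{C}_yE_i$ (this is where the planar cone geometry of Corollary~\ref{corollary5} enters), joins them by compact curves $\lambda_i\subset E_i$ so that every line through $y$ meets some $\lambda_i$, and uses the \emph{finite} number of components to extract a uniform radius $d>0$ with $U(x,d)\subset E$ for all $x\in\bigcup_i\lambda_i$; then any boundary point $z_k\in\partial G_k\cap U(y,d)$ of an approximating set is shown to be a $1$-nonconvexity point of $G_k$ (every line through $z_k$ hits the $d$-tube around some $\lambda_q$), contradicting weak $1$-convexity of $G_k$. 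You instead take boundary points $y_k\in\partial G^k$ tending to $x_0$, lines $\ell_k\ni y_k$ with $\ell_k\cap G^k=\varnothing$ (hence $\ell_k\cap E=\varnothing$), and pass to a limit line $\ell_0$ through $x_0$; since $\mathrm{Int}\,E$ is open, $\ell_0\cap\mathrm{Int}\,E=\varnothing$, contradicting the choice of $x_0$ directly. Your limiting step and the production of the $y_k$ (via $z_j\notin E$ near $x_0$, $z_j\notin G^k$ for large $k$, and a segment from $z_j$ to $x_0$ crossing $\partial G^k$) are sound. What your route buys is economy and generality: it needs only compactness of the circle of directions and openness of $\mathrm{Int}\,E$, so it never uses the finite-component hypothesis or the dimension two at all (and would give the analogous statement for weakly $m$-convex closed sets in $\mathbb{R}^n$); correspondingly, your closing remark that Corollary~\ref{corollary5} and the finiteness of the number of components are needed to make the lines converge is superfluous --- a convergent subsequence of directions already does the job. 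The paper's construction, by contrast, produces explicit $1$-nonconvexity points of the approximating sets, at the price of exactly those extra hypotheses.
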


\begin{proof}[Proof.]
Suppose $\mathrm{Int}\, E$ is not weakly $1$-convex. Then there exists a $1$-nonconvexity point $y\in\partial E$ of the set $\mathrm{Int}\, E$.

Suppose $E_i$, $i=1,\ldots,k$, are the components of $\mathrm{Int}\, E$. Let $\mathrm{C}_{i,j}=\mathrm{C}_yE_i\cap \mathrm{C}_yE_j$, $i,j=1,\ldots,k$ (see Figure \ref{Fig1} a)).  Since $y$ is a $1$-nonconvexity point of $\mathrm{Int}\, E$, then for any fixed index $i\in\{1,\ldots,k\}$ there exist the indices $j(i)\in\{1,\ldots,k\}$ such that $\mathrm{C}_{i,j(i)}\ne\varnothing$. Since the cones $\mathrm{C}_yE_i$, $i=1,\ldots,k$,  are open, we can reduce them so that the intersections of the reduced cones remain non empty. Denote the reduced cones  by  $\widetilde{\mathrm{C}}_{y}E_i$, $i=1,\ldots,k$. Then $\overline{\widetilde{\mathrm{C}}_{y}E_i}\subset \mathrm{C}_yE_i$. The boundary of  $\widetilde{\mathrm{C}}_{y}E_i$ consists of two straight lines passing through $y$, considering Corollary \ref{corollary5}. Denote them by $\gamma^1_i(y)$, $\gamma^2_i(y)$.  Moreover, $\gamma^1_i(y), \gamma^2_i(y)\subset \mathrm{C}_yE_i$. Thus,   $\gamma^1_i(y)\cap  E_i\ne\varnothing$, $\gamma^2_i(y)\cap  E_i\ne\varnothing$ by Definition~\ref{def8}. \begin{figure}[h]
	\centering
   \includegraphics[width=14 cm]{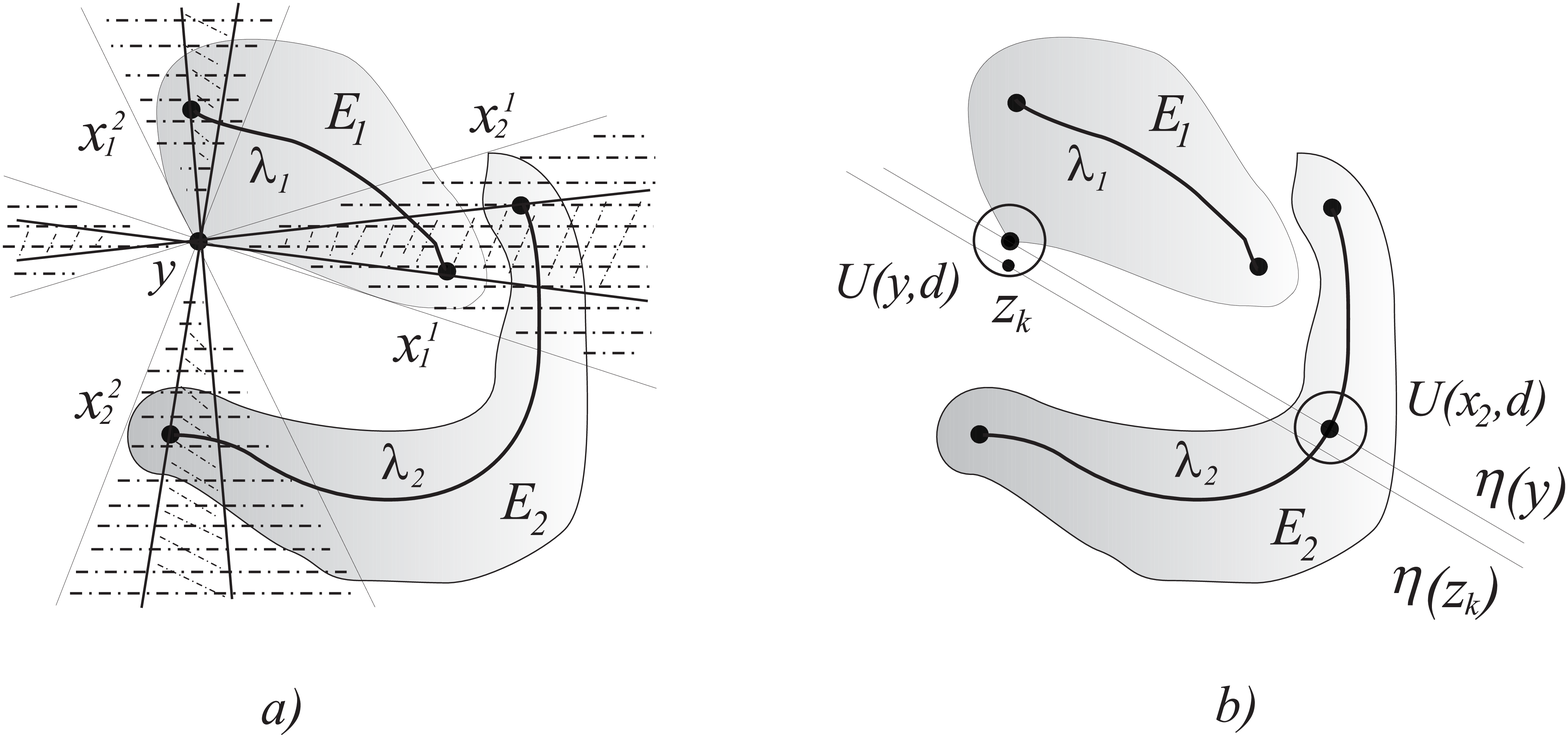}
	\caption{}\label{Fig1}
\end{figure}
Let
 $$x_i^1\in \gamma^1_i(y)\cap  E_i,\quad x_i^2\in \gamma^2_i(y)\cap  E_i,\,\,i=1,\ldots,k.$$

 Construct curves $\lambda_i\subset   E_i$, $i=\overline{1,k}$,  connecting the points $x_i^1$, $x_i^2$.
 Then for any straight line $\gamma(y)$ passing through the point $y$, there exists $i\in\{1,\ldots, k\}$ such that  $\gamma(y)\cap  \lambda_i\ne\varnothing$.

 Consider the function
 $$d_j(x)=\inf\limits_{x^0\in \partial E_j}|x-x^0|,\quad x\in E_j,\quad j=\overline{1,k}.$$
 It is continuous  in the domain $E_j$, $j=\overline{1,k}$. Then its restriction on the compact $\lambda_j$, $j=\overline{1,k}$, reaches its minimum $d_j>0$ on this compact, i. e.,
 $$
 d_j=\min\limits_{x\in \lambda_j}d_j(x),\quad j=\overline{1,k}.
 $$
 Since $E$ has the finite number of components, there exists
 $$
 d=\min\limits_{j=\overline{1,k}}d_j>0.
 $$
Then for any point $x\in \lambda_j$, $j=\overline{1,k}$, its neighborhood $U(x, d)\subset E$.
 Consider the neighborhood $U(y, d)$ of the point $y$ (see Figure \ref{Fig1} b)).
Since $E$ is weakly $1$-convex, there exists a family of open, weakly $1$-convex sets $G_k$, $k=1,2,\ldots$, approximating $E$ from the outside. This gives that starting from some index $k_0$,  $\partial G_k\cap U(y, d)\ne\varnothing$, $k\ge k_0$.  Let $z_k\in\partial G_k\cap U(y, d)$, $k=k_0, k_0+1,\ldots$. Draw an arbitrary straight $\eta(z_k)$ passing through $z_k$.  The straight $\eta(y)$ parallel to $\eta(z_k)$ and passing through $y$ intersects some curve $\lambda_q$, $q\in\{1,\ldots, k\}$ at a point $x_q$. Since $U(x_q, d)\subset E$ and $\eta(z_k)\cap U(x_q,d)\ne\varnothing$, then $\eta(z_k)\cap E\ne\varnothing$. Since $G_k\supset E$, $k=1,2,\ldots$, then $\eta(z_k)\cap G_k\ne\varnothing$, $k\ge k_0$.

 Since we choose the straight $\eta(z_k)$ arbitrarily, the point $z_k\in\partial G_k$ is a $1$-nonconvexity  point of $G_k$, $k=k_0,k_0+1,\ldots$, and we have now reached a contradiction. Thus, the assumption is wrong, and the theorem is proved.
\end{proof}
\begin{figure}[h]
    \centering
   \includegraphics[width=3 cm]{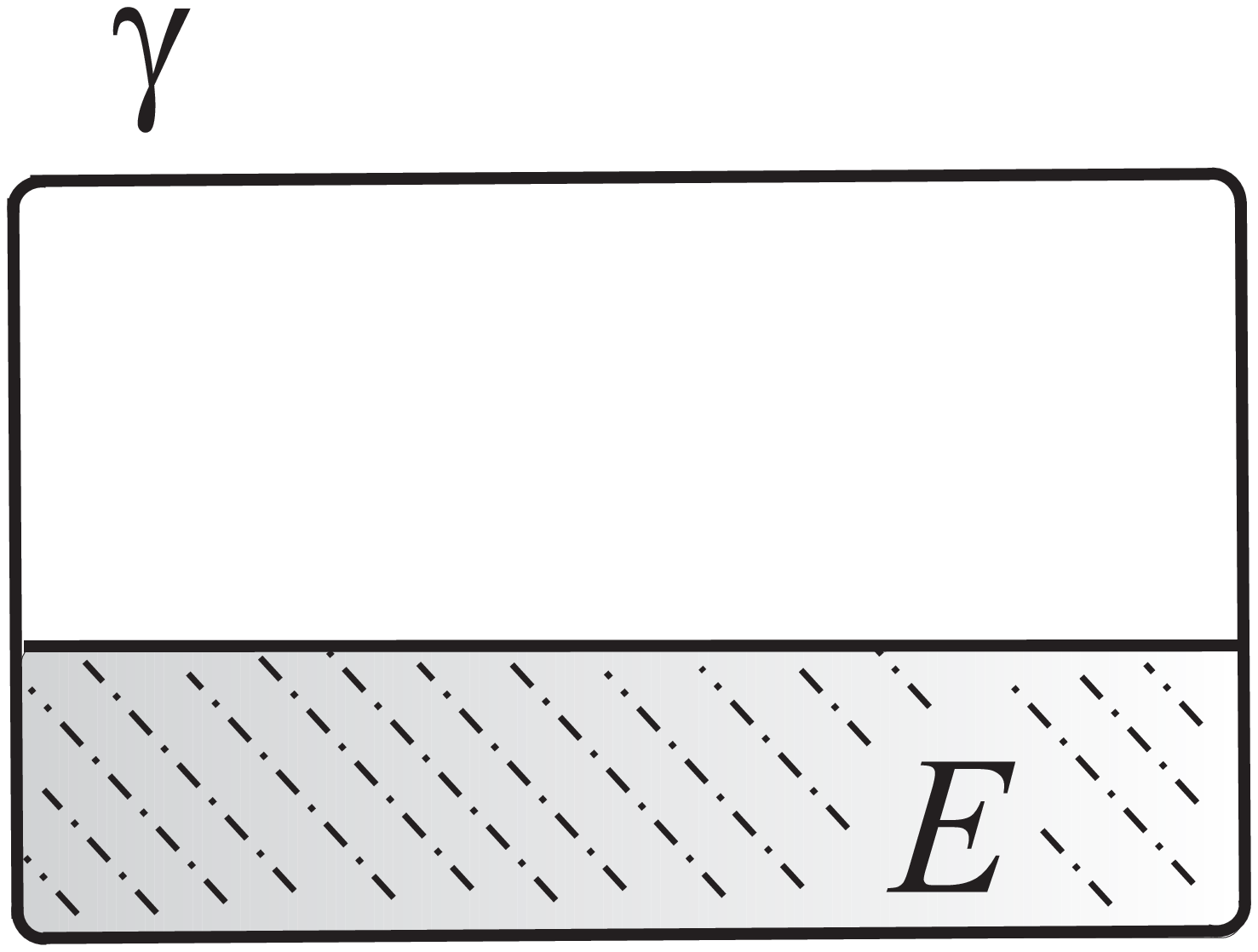}
    \caption{}\label{Fig2}
\end{figure}
The converse statement is not always true. An example of a closed, not weakly $1$-convex set such that its interior  is weakly $1$-convex is as follows. Consider an open convex set $E\subset\mathbb{R}^2$ such that $\mathrm{Int}\,\overline{ E}=E$ and connect any two of its boundary points by a curve $\gamma\subset \mathbb{R}^2\setminus \overline{E}$ (see Figure \ref{Fig2}). Then the closed set $\gamma\cup\overline{ E}$ is not weakly $1$-convex and its interior is weakly $1$-convex, since $\mathrm{Int}\,(\gamma\cup\overline{ E})=E$.

\section{Connected sets of the class $\mathbf{WC^n_m}\setminus \mathbf{C^n_m}$, $n\ge 3$, $1\le m<n-1$}

The estimate of the number of components of the closed sets of the class $\mathbf{WC^n_m}\setminus \mathbf{C^n_m}$, $n\ge 3$,  $1\le m<n-1$, is expectedly the same as for the open sets of this class. To prove this, first, provide here the folllowing
\begin{lemma}\label{lemm4}
The closure of an open set $E$ of the class $\mathbf{WC^n_m}\setminus \mathbf{C^n_m}$, $n\ge 2$, is not $m$-convex, $1\le m<n$.
\end{lemma}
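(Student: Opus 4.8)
The goal is to show: if $E$ is an open set in the class $\mathbf{WC^n_m}\setminus\mathbf{C^n_m}$, then its closure $\overline{E}$ is not $m$-convex. By definition, $E$ being not $m$-convex means there is an $m$-nonconvexity point of $E$, i.e. a point $x_0\in\mathbb{R}^n\setminus E$ such that every $m$-plane through $x_0$ meets $E$. Since $E$ is weakly $m$-convex and open, no such $x_0$ can lie on $\partial E$, so in fact $x_0$ lies in the open complement of $\overline{E}$. The plan is to show that this same point $x_0$ is an $m$-nonconvexity point of $\overline{E}$, which will immediately give that $\overline{E}$ is not $m$-convex (indeed not even $m$-convex with respect to $x_0$).

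First I would fix such a point $x_0\in(E)^{\triangle}_m$ and argue $x_0\notin\overline{E}$: if $x_0\in\partial E$, then weak $m$-convexity of the open set $E$ supplies an $m$-plane through $x_0$ disjoint from $E$, contradicting $x_0\in(E)^{\triangle}_m$; and $x_0\notin E$ by definition of an $m$-nonconvexity point. Hence $x_0$ lies in the open set $\mathbb{R}^n\setminus\overline{E}$. Next, take an arbitrary $m$-plane $L$ through $x_0$. Since $x_0\in(E)^{\triangle}_m$ we have $L\cap E\neq\varnothing$, and because $E\subset\overline{E}$ this gives $L\cap\overline{E}\neq\varnothing$. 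As $L$ was arbitrary, every $m$-plane through $x_0$ meets $\overline{E}$, so $x_0$ is an $m$-nonconvexity point of $\overline{E}$. Since $x_0\notin\overline{E}$, this witnesses that $\overline{E}$ fails to be $m$-convex with respect to $x_0$, hence $\overline{E}\notin\mathbf{C^n_m}$.

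This argument is essentially immediate once one observes the key point: adding boundary points to a set can only enlarge its cone/intersection structure, so $m$-nonconvexity points are inherited by the closure, provided they are not swallowed by the enlargement — and the weak $m$-convexity hypothesis is exactly what guarantees the distinguished point $x_0$ stays outside $\overline{E}$. I do not anticipate a genuine obstacle; the only thing to be careful about is the two-line verification that $x_0\notin\partial E$, which uses Definition~\ref{def3} in the contrapositive direction. An alternative phrasing avoiding the closure directly: one could instead note that any point of $(E)^{\triangle}_m$ is in the complement of $E$ but, by weak $m$-convexity, at positive... actually there is no need — the complement of $\overline{E}$ being open already follows from $\overline{E}$ being closed, and membership $x_0\in\mathbb{R}^n\setminus\overline{E}$ is all that is required for the definition of "$m$-convex with respect to a point" to apply. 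So the proof reduces to the short chain of implications above.
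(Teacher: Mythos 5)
Your proof is correct and follows essentially the same route as the paper: take an $m$-nonconvexity point of $E$, note it lies outside $\overline{E}$, and observe that every $m$-plane through it meets $E\subset\overline{E}$. The only difference is that you spell out why the point cannot lie on $\partial E$ (using weak $m$-convexity of the open set $E$), a step the paper asserts implicitly when it claims the point lies in $\mathbb{R}^n\setminus\overline{E}$.
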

\begin{proof}[Proof.]  Since  $E\in\mathbf{WC^n_m}\setminus \mathbf{C^n_m}$, there exists an $m$-nonconvexity point $x\in \mathbb{R}^n\setminus \overline{E}$ of the set $E$. Since $E\subset \overline{E}$, any $m$-dimensional plan passing through $x$ and intersecting $E$ intersects $\overline{E}$ as well. Thus, $x$ is an $m$-nonconvexity point of $\overline{E}$.
\end{proof}

In \cite{Osi} the examples of open and closed sets of the class $\mathbf{WC^n_{n-1}}\setminus \mathbf{C^n_{n-1}}$, $n\ge 2$,  were provided. Construct here a closed set of the class $\mathbf{WC^2_1}\setminus \mathbf{C^2_1}$ in a slightly different way.

\begin{example}\label{example4} Consider an open equilateral triangle $a_0b_0c_0$ and straight lines $\gamma_0^k$, $k=1,2,3$, containing the triangle sides. Let $\gamma^k_{t}$, $k=1,2,3$, $t\in(0,1]$, be the straight lines not intersecting $a_0b_0c_0$, parallel to the respective lines $\gamma_0^k$, and such that the distance between  $\gamma_0^k$ and  $\gamma^k_{t}$ equals $t$ (see Figure~\ref{Fig3}). Let $$a_{t}=\gamma^2_{t}\cap\gamma^3_{t}, b_{t}=\gamma^3_{t}\cap\gamma^1_{t}, \,\, c_{t}=\gamma^1_{t}\cap\gamma^2_{t}, \,\,t\in[0,1].$$
\begin{figure}[h]
    \centering
   \includegraphics[width=8 cm]{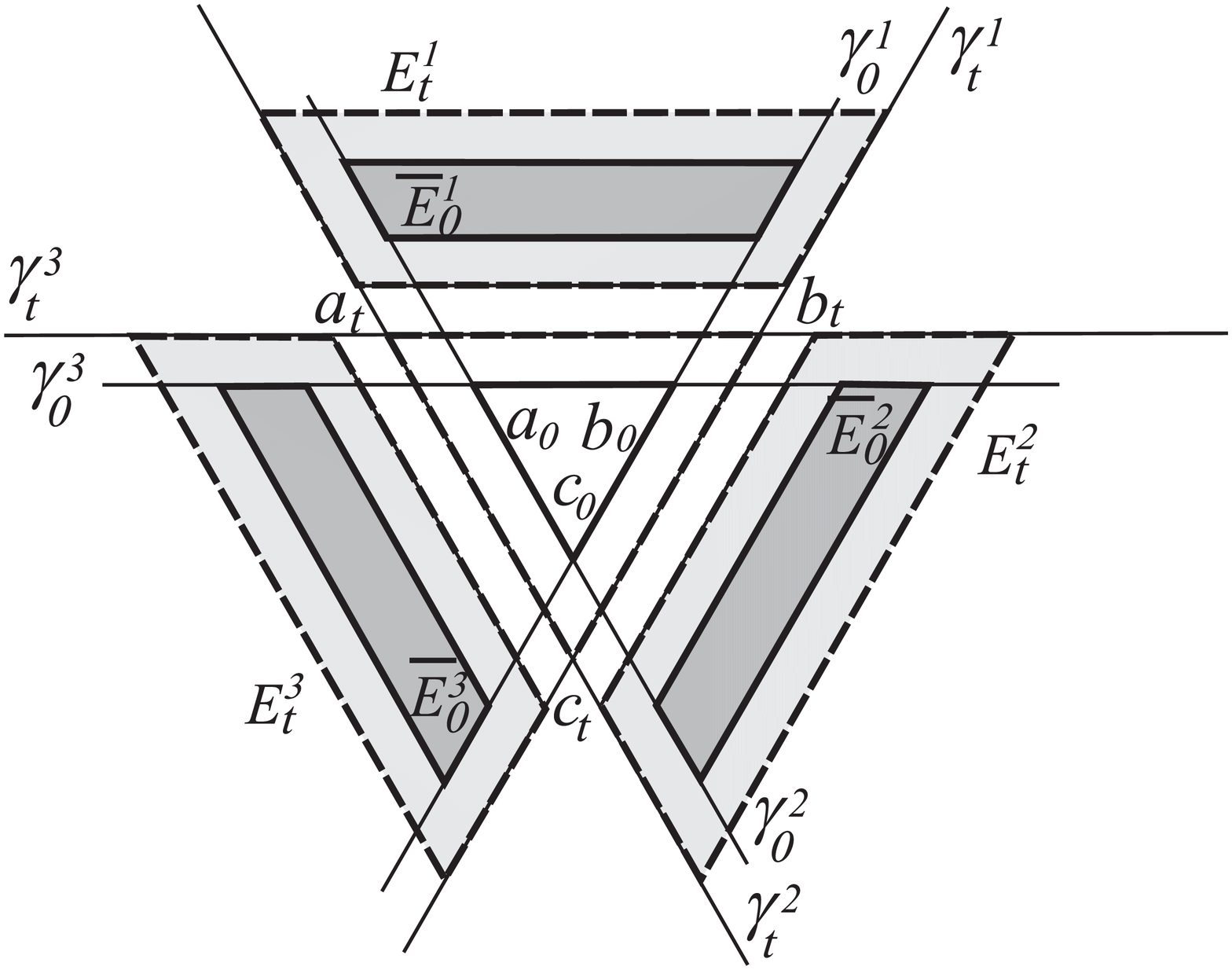}
    \caption{}\label{Fig3}
\end{figure}
Let $\alpha^1$ be the angle with vertex $a_{1}$, generated by  $\gamma^2_{1}$, $\gamma^3_{1}$, and containing the open triangle $a_{1}b_{1}c_{1}$. Let $\alpha^2$ be the angle with vertex $b_{1}$, generated by  $\gamma^3_{1}$, $\gamma^1_{1}$, and containing $a_{1}b_{1}c_{1}$. And let $\alpha^3$ be the angle with vertex $c_{1}$, generated by  $\gamma^1_{1}$, $\gamma^2_{1}$, and containing $a_{1}b_{1}c_{1}$.
Inscribe an open trapezium $E^k_{1}$, $k=1,2,3$, with height $>2$ into the angle $\alpha^k$ such that the parallel sides of $E^k_{1}$ are parallel to the straight $\gamma^k_{1}$ and
 $$\overline{E^k_{1}}\cap \overline{a_{1}b_{1}c_{1}}=\varnothing.$$

 Let $E^k_t\subset E^k_1$,  $t\in[0,1)$, $k=1,2,3$,  be the open trapezium the sides of which are parallel to the respective sides of $E^k_1$ and the distance between the sides of $E^k_t$ and the respective sides of $E^k_1$ equals $1-t$.
Then the open sets
$$
E_{t}=\bigcup\limits_{k=1}^3E^k_{t},\quad t\in[0,1],
$$
belong to the class $\mathbf{WC^2_1}\setminus \mathbf{C^2_1}$. Indeed, for a fixed $t\in[0,1]$, any point of $\partial E_{t}$ belongs to one of the sides of the trapeziums $E^k_{t}$, $k=1,2,3$, and the straight line passing through this side does not intersect $E_{t}$ by the construction.  Moreover, $(E_{t})^{\triangle}=a_{t}b_{t}c_{t}\ne\varnothing$, $t\in[0,1]$.

In addition, the closed set $\overline{E_{0}}$ belongs to the class $\mathbf{WC^2_1}\setminus \mathbf{C^2_1}$ by Lemma \ref{lemm4} and considering the fact that the family of sets $E_{1/k}$, $k=1,2,\ldots$, approximates $\overline{E_{0}}$ from the outside.
\end{example}

\begin{theorem}
There exist closed connected sets in the space $\mathbb{R}^n$, $n\ge 3$, of the class $\mathbf{WC^n_m}\setminus \mathbf{C^n_m}$, $1\le m<n-1$.
\end{theorem}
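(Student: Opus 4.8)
The plan is to reduce the case $1\le m\le n-2$ to the case $m=1$ by means of Lemma~\ref{pro3}, and then to settle the case $m=1$ in every dimension $p\ge 3$ by carrying out the $p$-dimensional analogue of the construction of Example~\ref{example4} and passing to the closure, exactly as in the last paragraph of that example.

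First I would fix $m$ with $1\le m\le n-2$ and put $p:=n-m+1$, so that $3\le p\le n$. The claim is that it suffices to exhibit, for each such $p$, a \emph{closed connected} set $E^p\subset\mathbb{R}^p$ of the class $\mathbf{WC^p_1}\setminus\mathbf{C^p_1}$. Indeed, if $m=1$ then $p=n$ and $E^n$ is itself the required set in $\mathbb{R}^n$; and if $2\le m\le n-2$, then $n-p\ge 1$ and, by Lemma~\ref{pro3}, the set $E:=E^p\times\mathbb{R}^{n-p}$ belongs to the class $\mathbf{WC^n_{n-p+1}}\setminus\mathbf{C^n_{n-p+1}}=\mathbf{WC^n_m}\setminus\mathbf{C^n_m}$, while $E$ is closed, being the product of a closed set with $\mathbb{R}^{n-p}$, and connected, being the product of two connected sets.

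To construct $E^p$ for $p\ge 3$, note first that by Lemma~\ref{theor02}, which applies since $1<p-1$, there is a connected domain $D\subset\mathbb{R}^p$ of the class $\mathbf{WC^p_1}\setminus\mathbf{C^p_1}$; concretely, one may take $p+1$ open slabs placed just outside the facets of a fixed $p$-dimensional simplex and chosen large enough that every straight line through the barycenter of the simplex meets one of them, and then joined into a single domain $D$ by finitely many thin tubes. As in Example~\ref{example4}, this configuration sits inside a one-parameter family $D_t$, $t\in[0,1]$, of connected open weakly $1$-convex sets with $\overline{D_{t'}}\subset D_t$ for $t<t'$, so that the sets $D_{1/k}$, $k=1,2,\ldots$, approximate $\overline{D_0}$ from the outside. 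Hence $\overline{D_0}$ is closed and weakly $1$-convex by Definition~\ref{def4}; it is connected, being the closure of the connected set $D_0$; and it is not $1$-convex by Lemma~\ref{lemm4}. Taking $E^p:=\overline{D_0}$ completes the argument.

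The step I expect to be the main obstacle is the verification that the domain $D$ above is weakly $1$-convex, that is, that through every boundary point of $D$ there passes a straight line disjoint from $D$. This requires, on the one hand, fixing the relative positions of the $p+1$ slabs so that the hyperplane determined by a boundary face of one slab (which is parallel to the corresponding facet of the simplex) avoids all the other slabs, and, on the other hand, routing the connecting tubes so that no boundary point of a tube becomes a $1$-nonconvexity point of $D$; the latter is possible precisely because $p\ge 3$, the obstruction in the plane being Lemma~\ref{theor01}. Once one admissible configuration has been fixed, the remaining verifications, including the existence of the nested family $D_t$ and the equality $\overline{D_0}=\bigcap_k D_{1/k}$, are routine.
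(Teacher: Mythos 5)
Your reduction to the case $m=1$ via Lemma \ref{pro3} is exactly the paper's final step and is fine, as is the appeal to Lemma \ref{lemm4} for non-$1$-convexity of the closure. The gap is in the middle, and it is the heart of the theorem: for each $p\ge 3$ you must produce a connected domain $D_0\subset\mathbb{R}^p$ of the class $\mathbf{WC^p_1}\setminus \mathbf{C^p_1}$ \emph{together with} a nested family of open, connected, weakly $1$-convex sets approximating $\overline{D_0}$ from the outside; by Definition \ref{def4} the closure of a weakly $1$-convex domain is not automatically weakly $1$-convex, so citing Lemma \ref{theor02} for the bare existence of some domain of the class does not suffice — the approximating family is the whole point. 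You acknowledge this and sketch a configuration of $p+1$ slabs near the facets of a simplex joined by thin tubes, but you never verify the decisive property: that through every boundary point of a tube (and of a plate, once tubes are attached) there is a straight line missing the entire set, and that this persists for every member $D_t$ of the nested family. Saying the routing ``is possible precisely because $p\ge 3$'' only removes the planar obstruction of Lemma \ref{theor01}; it does not construct anything. Note also that if the ``slabs'' are genuinely unbounded slabs the configuration fails outright: a line avoids an unbounded slab only by being parallel to its bounding hyperplanes, so a line through a boundary point of one slab would have to be parallel to the remaining $p$ facet hyperplanes simultaneously, and their direction spaces in general position meet only in $\{0\}$; hence the pieces must be bounded, and the avoidance argument must then be done by hand, as in Example \ref{example4}.

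This unproved step is precisely where the paper spends all its effort, and it achieves connectivity by a different mechanism than tubes: starting from the planar Example \ref{example4}, it forms $E_k\times[1/k-s,\,s-1/k]$, restores connectivity by capping both ends with prisms over the convex hull $P_k^2$, and then repairs the weak $1$-convexity destroyed at the caps by removing two \emph{oblique} prisms $Ll^3_k$, $Lr^3_k$ over triangles containing the nonconvexity set; the slanted generatrices provide escape lines through the newly created boundary points, while choosing $s$ large keeps $(E^3_k)^\triangle\ne\varnothing$, so the domains stay in $\mathbf{WC^3_1}\setminus\mathbf{C^3_1}$. Crucially, the parameter $k$ of this construction automatically produces the nested family approximating $\overline{E^3_0}$ from the outside, and the whole procedure iterates by induction on the dimension before the product with $\mathbb{R}^{m-1}$ is taken. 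Unless you carry out an analogous explicit verification for your slab-and-tube configuration — weak $1$-convexity at every boundary point, nonemptiness of the set of $1$-nonconvexity points, and the existence of the nested family $D_t$ — your argument does not yet prove the theorem.
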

\begin{proof} [Proof.]  Prove the theorem by constructing examples of appropriate sets.
First construct the domains in the space
$\mathbb{R}^3$ of the class  $\mathbf{WC^3_1}\setminus \mathbf{C^3_1}$ approximating from the outside a closed connected set of the same class.
Consider the open sets
$$
E_{0},\quad E_k:=E_{1/k}, \quad k=1,2,\ldots,
$$
of the class $\mathbf{WC^2_1}\setminus \mathbf{C^2_1}$ constructed in Example \ref{example4}. By the construction,  $(E_0)^\triangle\subset(E_{k+1})^\triangle\subset (E_{k})^\triangle\subset (E_1)^\triangle=a_1b_1c_1$, $k=1,2,\ldots$. Further, for the convenience in the notations, we set
$$
1/0:=0.
$$
Consider the sets
\begin{equation}\label{theor5}
\tilde{\tilde{E}}_{k}^3:=E_{k}\times [1/k-s,s-1/k],\quad s>1, \quad k=0,1,2,\ldots.
\end{equation}
Let  $P_{k}^2\subset\mathbb{R}^2$ be the convex hull of the set $E_{k}$, $k=0,1,2,\ldots$. Construct the following prisms:
\begin{equation*}
\begin{split}
Pl^3_{k}:=P_{k}^2\times \left[-1/k-1-s, 1/k-s\right],&\\
Pr^3_{k}:=P_{k}^2\times
\left[s-1/k,s+1+1/k\right],&\quad  k=0,1,\ldots.
\end{split}
\end{equation*}
Now consider the sets
$$
\tilde{E}_{k}^3:=\mathrm{Int}\,(Pl^3_{k}\cup \tilde{\tilde{E}}^3_{k}\cup Pr^3_{k}),\quad k=0,1,\ldots.
$$
They are $1$-convex with respect to any point of $\partial \tilde{E}_{k}^3$ except the points of  the respective triangles:
\begin{equation*}
\begin{split}
\widetilde{Rl}^2_{k}:=\{(x_1,x_2,x_3)\in \partial \tilde{E}_{k}^3: (x_1,x_2)\in (E_k)^\triangle, x_3=1/k-s\},&\\
\widetilde{Rr}^2_{k}:=\{(x_1,x_2,x_3)\in \partial \tilde{E}_{k}^3: (x_1,x_2)\in (E_k)^\triangle, x_3=s-1/k\}.&
\end{split}
\end{equation*}
Moreover,
\begin{equation}\label{equ1}
(\tilde{E}_{k}^3)^\triangle=(E_k)^\triangle\times [1/k-s,s-1/k].
\end{equation}
Let $a'_kb'_kc'_k\supset a_1b_1c_1$, $k=0,1,2,\ldots$,  be the open triangles the sides of which are parallel to the respective sides of $a_1b_1c_1$ and the distance between the sides of $a'_kb'_kc'_k$ and the respective sides of $a_1b_1c_1$ equals $1-1/k$. Then $a'_1b'_1c'_1= a_1b_1c_1$, $a'_{k+1}b'_{k+1}c'_{k+1}\supset a'_kb'_kc'_k$, and
\begin{equation}\label{theor5_0}
a'_0b'_0c'_0\supset a'_kb'_kc'_k\supset (E_k)^\triangle\supset (E_0)^\triangle,\,k=1,2,\ldots.
\end{equation}
Consider the triangles
\begin{equation*}
\begin{split}
Rl^2_{k}:=\{(x_1,x_2,x_3)\in \partial \tilde{E}_{k}^3: (x_1,x_2)\in a'_kb'_kc'_k, x_3=1/k-s\},&\\
Rr^2_{k}:=\{(x_1,x_2,x_3)\in \partial \tilde{E}_{k}^3: (x_1,x_2)\in a'_kb'_kc'_k, x_3=s-1/k\},& \,\,k=0,1,\ldots,
\end{split}
\end{equation*}
and some vector $\overrightarrow{a_3}$ generating an angle greater than  $0$ and less than $\dfrac{\pi}{2}$ with the positive direction of the axis $Ox_3$. This provides that two oblique prisms $Ll^3_{k}$, $Lr^3_{k}$ with respective bases $Rl^2_{k}$, $Rr^2_{k}$ and generatrices parallel to the vector $\overrightarrow{a_3}$ are such that $Ll^3_{0}\supset Ll^3_{k+1}\supset Ll^3_{k}$, $Lr^3_{0}\supset Lr^3_{k+1}\supset Lr^3_{k}$, $k=1,2,\ldots$ (see Figure~\ref{Fig11}~b)).

Remove  the closures of the prisms $Ll^3_{k}$, $Lr^3_{k}$ from the set $\tilde{E}_{k}^3$, $k=0,1,\ldots$ (see Figure~\ref{Fig11}~a)).
Then, considering (\ref{theor5_0}), the sets
$$
E^3_{k}:=\tilde{E}_k^3\setminus (\overline{Ll^3_{k}}\cup \overline{Lr^3_{k}}),\quad k=0,1,\ldots,
$$
 are weakly $1$-convex domains.
 Moreover, choose $s$ form (\ref{theor5}) large enough so that $\overline{Ll^3_{0}}\cap \overline{Lr^3_{0}}=\varnothing.$ Then
 \begin{equation}\label{equ2}
 (E^3_{k})^\triangle=(\tilde{E}_{k}^3)^\triangle\setminus (\overline{Ll^3_{k}}\cup \overline{Lr^3_{k}})\ne\varnothing,\quad k=0,1,\ldots.
 \end{equation}

Thus, the domains $E_{k}^3\subset \mathbb{R}^3$, $k=0,1,\ldots$,
belong to the class $\mathbf{WC^3_1}\setminus \mathbf{C^3_1}$. And the closure $\overline{E_{0}^3}$ of the set $E_{0}^3$ is approximated from the outside by the family of the domains $E_{k}^3$, $k=1,2,\ldots$ (see Figure~\ref{Fig11}~b)). Moreover, $\overline{E_{0}^3}$ is not $1$-convex by Lemma~\ref{lemm4}.  Thus, the closed and connected set $\overline{E_{0}^3}$ belongs to the class $\mathbf{WC^3_1}\setminus \mathbf{C^3_1}$.

\begin{figure}[h]
    \centering
   \includegraphics[width=14 cm]{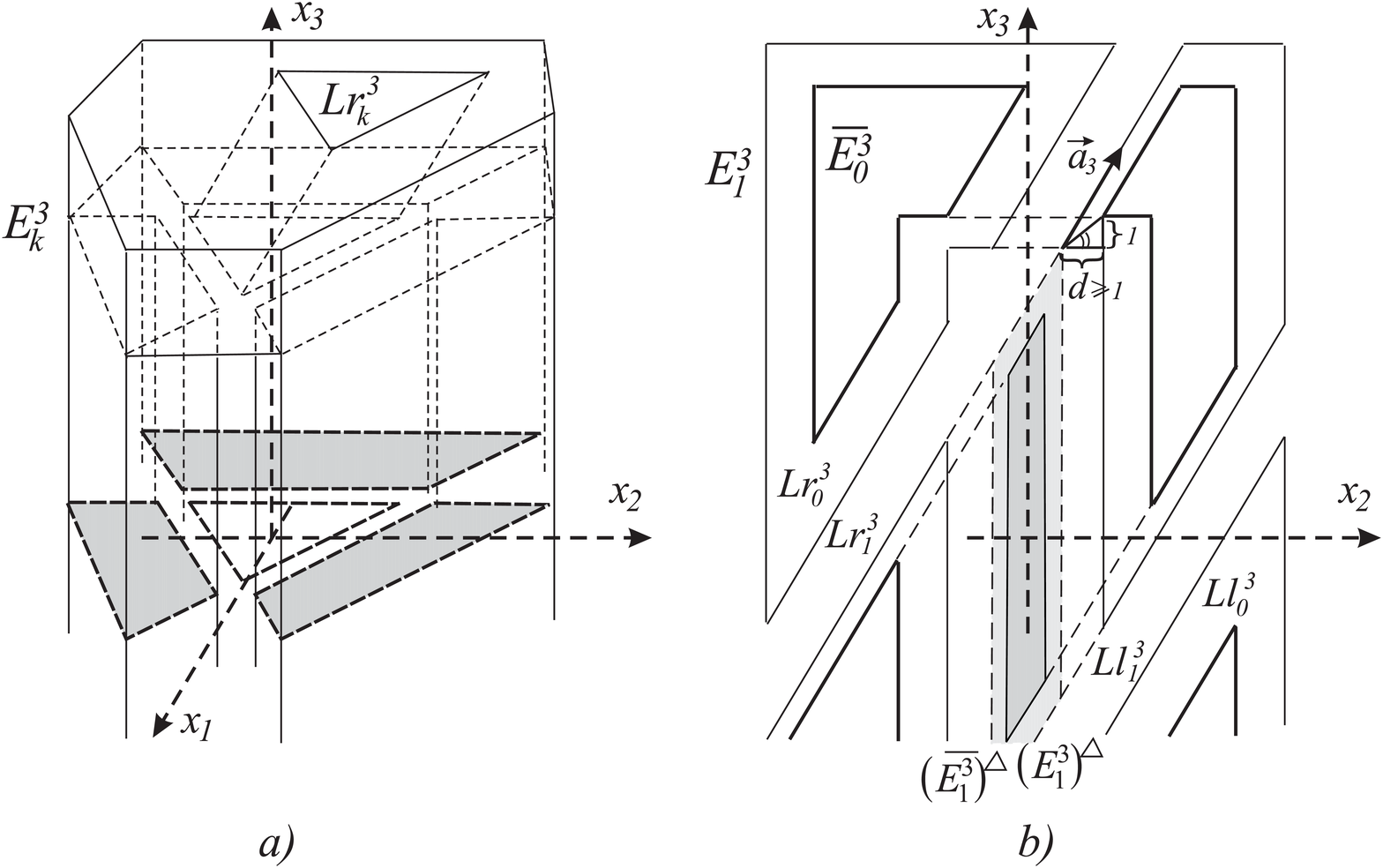}
    \caption{}\label{Fig11}
\end{figure}

Construct domains in the space
$\mathbb{R}^4$ of the class  $\mathbf{WC^4_1}\setminus \mathbf{C^4_1}$ approximating from the outside a closed connected set of the same class.

Consider the sets
$$
\tilde{\tilde{E}}_{k}^4:=E^3_{k}\times [1/k-s,s-1/k], \quad k=0,1,2,\ldots.
$$
Let  $P_{k}^3\subset\mathbb{R}^2$ be the convex hull of the set $E^3_{k}$, $k=0,1,2,\ldots$. Construct the following prisms:
\begin{equation*}
\begin{split}
Pl^4_{k}:=P_{k}^3\times \left[-1/k-1-s, 1/k-s\right],&\\
Pr^4_{k}:=P_{k}^3\times
\left[s-1/k,s+1+1/k\right],&\quad  k=0,1,\ldots.
\end{split}
\end{equation*}
Now consider the sets
$$
\tilde{E}_{k}^4:=\mathrm{Int}\,(Pl^4_{k}\cup \tilde{\tilde{E}}^4_{k}\cup Pr^4_{k}),\quad k=0,1,\ldots.
$$
They are $1$-convex with respect to any point of $\partial \tilde{E}_{k}^4$ except the points of  the sets
\begin{equation*}
\begin{split}
\widetilde{Rl}^3_{k}:=\{(x_1,x_2,x_3,x_4)\in \partial \tilde{E}_{k}^4: (x_1,x_2,x_3)\in (E_{k}^3)^\triangle, x_4=1/k-s\},&\\
\widetilde{Rr}^3_{k}:=\{(x_1,x_2,x_3,x_4)\in \partial \tilde{E}_{k}^4: (x_1,x_2,x_3)\in (E_{k}^3)^\triangle, x_4=s-1/k\}.&
\end{split}
\end{equation*}
Moreover,
$$
(\tilde{E}_{k}^4)^\triangle=(E_{k}^3)^\triangle\times [1/k-s,s-1/k],\quad k=0,1,\ldots.
$$
Construct the prisms
$$
L_{k}^3:=a'_kb'_kc'_k\times  [1/k-s,s-1/k], \quad k=0,1,\ldots.
$$
Then, considering (\ref{equ1}),  (\ref{theor5_0}), (\ref{equ2}),
\begin{equation}\label{theor5_1}
L_{k}^3\supset (\tilde{E}_{k}^3)^\triangle\supset (E^3_{k})^\triangle,\quad k=0,1,\ldots.
\end{equation}
Now consider the following sets:
\begin{equation*}
\begin{split}
Rl^3_{k}:=\{(x_1,x_2,x_3,x_4)\in \partial \tilde{E}_{k}^4: (x_1,x_2,x_3)\in L_{k}^3, x_4=1/k-s\},&\\
Rr^3_{k}:=\{(x_1,x_2,x_3,x_4)\in \partial \tilde{E}_{k}^4: (x_1,x_2,x_3)\in L_{k}^3, x_4=s-1/k\},&
\end{split}
\end{equation*}
$k=0,1,\ldots$. Since $L_{k+1}^3\supset L_{k}^3$, then $Rl^3_{k+1}\supset Rl^3_{k}$ and $Rr^3_{k+1}\supset Rr^3_{k}$. Moreover, considering (\ref{theor5_1}),
\begin{equation}\label{theor5_2}
Rl^3_{k}\supset \widetilde{Rl}^3_{k},\quad Rr^3_{k}\supset \widetilde{Rr}^3_{k}.
\end{equation}
 Consider some vector $\overrightarrow{a_4}$ generating an angle greater than  $0$ and less than $\dfrac{\pi}{2}$ with the positive direction of the axis $Ox_4$. This provides that two oblique prisms $Ll^4_{k}$, $Lr^4_{k}$ with respective bases $Rl^3_{k}$, $Rr^3_{k}$ and generatrices parallel to the vector $\overrightarrow{a_4}$ are such that $Ll^4_{0}\supset Ll^4_{k+1}\supset Ll^4_{k}$, $Lr^4_{0}\supset Lr^4_{k+1}\supset Lr^4_{k}$, $k=1,2,\ldots$.

Remove  the closures of the prisms $Ll^4_{k}$, $Lr^4_{k}$  from the set $\tilde{E}_{k}^4$, $k=0,1,\ldots$.
Then, considering (\ref{theor5_2}), the obtained sets
\begin{equation*}
E^4_{k}:=\tilde{E}_k^4\setminus (\overline{Ll^4_{k}}\cup \overline{Lr^4_{k}}),\quad k=0,1,\ldots,
\end{equation*}
 are weakly $1$-convex domains.
 Moreover, choose $s$ form (\ref{theor5}) large enough so that
 $\overline{Ll^4_{0}}\cap \overline{Lr^4_{0}}=\varnothing.$ Then
$$
 (E^4_{k})^\triangle=(\tilde{E}_{k}^4)^\triangle\setminus (\overline{Ll^4_{k}}\cup \overline{Lr^4_{k}})\ne\varnothing,\quad k=0,1,\ldots.
$$

Thus, the domains $E_{k}^4\subset \mathbb{R}^4$, $k=0,1,\ldots$,
belong to the class $\mathbf{WC^4_1}\setminus \mathbf{C^4_1}$. And the closure $\overline{E_{0}^4}$ of the set $E_{0}^4$ is approximated from the outside by the family of the domains $E_{k}^4$, $k=1,2,\ldots$. Moreover,  $\overline{E_{0}^4}$ is not $1$-convex by Lemma \ref{lemm4}.  Thus, the closed and connected set $\overline{E_{0}^4}$ belongs to the class $\mathbf{WC^4_1}\setminus \mathbf{C^4_1}$.

Extending the process of constructing the sets $E_k^n$, $k=1,2,\ldots$, and $\overline{E_{0}^n}$  to the spaces $\mathbb{R}^n$, $n>4$, using the sets $E_k^{n-1}$, $\overline{E_{0}^{n-1}}$ by the induction, we obtain domains and closed connected sets of the class $\mathbf{WC^n_1}\setminus \mathbf{C^n_1}$ for any $n\ge 3$. Then, by Lemma \ref{pro3}, the domains
$$
E_k^{n-m+1}\times\mathbb{R}^{m-1}\subset\mathbb{R}^{n},\quad  n\ge 3,\,\,1\le m<n-1,\,\, k=1,2,\ldots,
$$
and
the closed connected sets
$$
\overline{E_0^{n-m+1}}\times\mathbb{R}^{m-1}\subset\mathbb{R}^{n},\quad  n\ge 3,\,\,1\le m<n-1,
$$
belong to the class $\mathbf{WC^n_m}\setminus \mathbf{C^n_m}$. The theorem is proved.
\end{proof}


\bibliographystyle{pigc_plain}
\bibliography{biblio}

\end{document}